\newcommand{\Q}{\mathbb{Q}}
\newcommand{\Qp}{\Q_p}
\newcommand{\Qpb}{\bar{\Q}_p}
\newcommand{\dr}{\textup{dR}}
\newcommand{\hdr}{H_{\dr}}
\newcommand{\DD}{\mathcal{D}}
\newcommand{\Eg}{E_{\textup{good}}}
\newcommand{\ptt}{\partial_t}
\newcommand{\OO}{\mathcal{O}}
\newcommand{\tang}{\mathcal{T}}
\newcommand{\cano}{\mathbf{\omega}_C}
\newcommand{\invv}{^{\otimes -1}}
\newcommand{\ocol}{\OO_{\textup{Col}}}
\newcommand{\LL}{\mathcal{L}}
\newcommand{\odi}{\OO(\Delta)}
\newcommand{\odii}{\OO(-\Delta)}
\newcommand{\lgod}{\log_{\odi}}
\newcommand{\lgodi}{\log_{\odii}}
\newcommand{\loti}{\log_{\tang}}
\newcommand{\hoti}{h_{\tang}}
\newcommand{\delbar}{\bar{\partial}}
\newcommand{\bom}{\bar{\omega}}
\newcommand{\ord}{\operatorname{ord}}
\newcommand{\Div}{\operatorname{Div}}
\newcommand{\tr}{\operatorname{tr}}
\newcommand{\pab}{(A,B)}
\newtheorem{theorem}{Theorem}[section]
\newtheorem{proposition}[theorem]{Proposition}
\newtheorem{lemma}[theorem]{Lemma}
\newtheorem{corollary}[theorem]{Corollary}
\theoremstyle{definition}
\newtheorem{definition}[theorem]{Definition}
\newtheorem{remark}[theorem]{Remark}
\newtheorem{example}[theorem]{Example}
\newtheorem{assumption}[theorem]{Assumption}
\numberwithin{equation}{section}
\begin{document}
\title{Coleman-Gross height pairings and the $p$-adic sigma function}
\author{Jennifer S. Balakrishnan}
\address{Department of Mathematics\\ Harvard University\\ One Oxford Street\\
Cambridge, MA 02138}
\author{Amnon Besser}
\address{School Of Mathematical And Statistical Sciences\\
Arizona State University\\
PO Box 871804, Tempe, AZ, 85287-1804\\
USA
}
\curraddr{
Department of Mathematics\\
Ben-Gurion University of the Negev\\
P.O.B. 653\\
Be'er-Sheva 84105\\
Israel
}

\begin{abstract}
We give a direct proof that the Mazur-Tate and Coleman-Gross heights
on elliptic curves coincide. The main ingredient is to extend the
Coleman-Gross height to the case of divisors with non-disjoint
support and, doing some $p$-adic analysis, show that, in particular,
its component above $p$ gives, in the special case of an ordinary
elliptic curve, the $p$-adic sigma function. 

We use this result to
give a short proof of a theorem of Kim characterizing integral points
on elliptic curves in some cases under weaker assumptions. As a further application, we give new formulas to compute double Coleman integrals from tangential basepoints.
\end{abstract}
\maketitle

\section{Introduction}
\label{sec:intro}

Let $p$ be a prime number, let $F$ be a number field and  let $E$ be
an elliptic curve over $F$. In this paper we relate two constructions
of $p$-adic heights for $E$.

Suppose $E$ has ordinary
reduction at all primes above $p$. Then, the construction of Mazur and
Tate~\cite{Maz-Tat83,MTT86,Maz-Tat91,Maz-Ste-Tat05} uses
the $p$-adic sigma function constructed in~\cite{Maz-Tat91} to define the
$p$-adic height of a point, the resulting function being quadratic
resulting in a height pairing.

On the other hand, Coleman and
Gross~\cite{Col-Gro89} defined for any curve of positive genus with
good reduction above $p$ a
$p$-adic height pairing for divisors of degree $0$ with disjoint
support. This factors via the Jacobian of the curve and thus extends
to any pair of divisors without the disjoint support assumption.

Thus, there is a certain range in which both constructions
apply and then they are known to be equal by either the work of
Coleman~\cite{Col91} tying the Coleman-Gross height pairing with the
$p$-adic height defined via bi-extensions as in~\cite{Maz-Tat83}, or
by the work of the second author~\cite{Bes99a} tying it with the general height
pairing defined by Nekov{\'a}{\v r}~\cite{Nek93}. This proof is ultimately
global in nature, even though both heights are sums of local terms.

Both
constructions
have subsequently been implemented numerically: the Mazur-Tate
construction by Mazur, Stein, and Tate~\cite{Maz-Ste-Tat05} and the Coleman-Gross
construction, in the case of hyperelliptic curves, by the authors~\cite{Bes-Bal10}.

It is thus interesting to prove the equality of these two height
pairings by comparing the local terms. Note though that a priori the
local terms are defined for two divisors with disjoint support in the
Coleman-Gross construction and for a divisor with itself in the
Mazur-Tate construction. Our modest goal is to provide such a comparison.

What we in fact do is to stretch both definitions somewhat, and we
compare them on the extended range. From the Coleman-Gross side, we
discuss the extension of the height pairing to divisors with
non-disjoint support, which is suggested by the work of Gross in the
complex case~\cite{Gross86}. This involves choosing a tangent vector at
each of the points in the common support of the divisors. The height
pairing decomposes as a sum of local terms, which depend on the
tangent vectors but their sum does not. The terms away from $p$ are
already discussed by Gross and the extension to places above $p$ is
rather straightforward. We note that the procedure is rather easy to implement
numerically, and there may well be some interest in such an implementation.

There is little trouble in comparing the height pairing away from $p$,
so the interest lies in the $p$-adic theory. We analyze it using tools
from $p$-adic Arakelov theory~\cite{Bes00}. For a curve $C$ over the
algebraic closure of the $p$-adic numbers, fixing a point $x_0$ and a
tangent vector $t_0$ at $x_0$, the local height $h(x-x_0,x-x_0)$
becomes a function on the tangent bundle to $C$. We analyze this
function for a general curve. For an elliptic curve, the choice of an
invariant differential provides a canonical choice of tangent
vectors, and we prove that the local height, computed using these
choices, is essentially the logarithm of the $p$-adic sigma
function. From this it is standard to deduce the equality of the
heights.

As a corollary, we give a short proof for a recent theorem of
Kim~\cite{Kim10} (corrected by the first author, Kedlaya and Kim in~\cite{BKK11}),
giving a $p$-adic characterization of
integral points of some elliptic curves over $\Q$. In particular, we
remove the assumption there about the finiteness of the $p$-part of
the Tate-Shafarevich group.

We conclude by giving some numerical examples illustrating these results, as well as new formulas to compute double Coleman integrals from tangential basepoints.

We would like to thank J. Ellenberg for providing the motivation for this work.
\section{An extension of the Coleman-Gross height to non-disjoint
  support} 
\label{sec:cg}

In this section we recall what we need about the Coleman-Gross height
pairing~\cite{Col-Gro89} and we make the easy extension to divisors
with non-disjoint
support suggested in the complex case by~\cite{Gross86}.

The height pairing is defined for divisors of degree $0$ on a smooth
complete curve
$X/F$ defined over a number field $F$.
\begin{remark}
  Since we will be using, as in~\cite{Bes00}, the version of Coleman
  integration developed by Vologodsky~\cite{Vol01}, we do not assume
  that $X$ has good reduction at primes above $p$. However, the
  resulting height pairing is only assured to coincide with standard
  definitions of height pairings under this additional
  assumption~\cite{Col91,Bes99a}.
\end{remark}

To define the height pairing
one needs the following data~\cite[Section~2]{Bes99a}:
\begin{itemize}
\item  A ``global log''- a continuous idele class character
 \begin{equation*}
   \ell: \mathbb{A}_K^\ast/K^\ast \to \Q_p
 \end{equation*}
 such that above $p$ the local components are ramified

\item For each $v|p$ a choice of a subspace $W_v\subset\hdr^1(X\otimes
  K_v/K_v)$ complementary to the space of holomorphic forms.
\end{itemize}
To use $p$-adic Arakelov geometry we need to impose the following
\begin{assumption}\label{isotrop}
  Each subspace $W_v$ is isotropic with respect to the cup product.
\end{assumption}
This assumption guarantees that the height pairing is
symmetric~\cite[Proposition~5.2]{Col-Gro89}. It is
also automatically satisfied for elliptic curves, as the dimension of
$W_v$ is $1$.

One obtains from $\ell$, for each $v|p$, $\Qp$-linear maps $\tr_v:K\to
\Qp$
as well as a choice of the $p$-adic logarithm $ \log_v : K_v^\ast
\to K_v$ such that $\ell_v = \tr_v\circ \log_v $.

Suppose now that $D_1$ and $D_2$ are $F$-rational divisors on $X$ of
degree $0$ and make the additional assumption that
\begin{equation*}
  D_1 = \sum_i n_i (x_i)\;,\text{ with $x_i$ rational over $F$}.
\end{equation*}
This assumption is eventually removed by going to a field extension
where $D_1$ has this property and using the functoriality properties
of the height. We do not assume that $D_1$ and $D_2$ have disjoint
support. The price we have to pay for that is that the height pairing
decomposes as usual
$$h({D_1},{D_2})= \sum_v h_v({D_1},{D_2})$$
over all finite places $v$, but the local components depend in
addition on the choice of $F$-rational tangent vectors at the points
of the common support. As we will see, the behavior of the local heights, with
respect to these vectors, is such that the global
height is independent of these choices.

The local component $h_v$ depends only on the completion at $v$, so let
$K=F_v$ and $C=X_v=X\otimes_F K$. Let $\chi=\ell_v$ be the local component
of $\ell$ at $v$. When $v|p$ let $\log=\log_v$, and $\tr=\tr_v$ be the
branch of the 
$p$-adic logarithm and the trace map deduced from it, and let $W=W_v$
be the complementary subspace.

To compute the local height pairings we need tangent vectors. For ease
of presentation, let us
assume that we have chosen, at each $K$-rational point $x$ of
$C$,  a
tangent vector $t=t_x$. For each such $x$, we further choose a local
parameter $z=z_x$ normalized in such a way that $\ptt z = 1$, where
$\ptt$ is the derivation associated to $t$. Using
this parameter, we can define a ``value'' for any rational function
$f$, defined over $K$,
at $x$ by 
\begin{equation*}
  f[x] = f(x,t) = \frac{f}{z^{m}}(x),
\end{equation*}
where $m=\ord_x(f)$ is the order of $f$ at $x$. Note that this clearly
depends only on $t$ and not on the particular choice of parameter
$z$. Given a divisor $D=\sum n_i (x_i)$, with all $x_i$ defined over
$K$, we may define the
value of $f$ at $D$ by the rule
\begin{equation*}
  f[D] := \prod_i f(x_i)^{n_i}\;.
\end{equation*}
We let $ Z^0(C)\subset \Div^0(C) $ be the group of degree zero
divisors on $C$ and its subgroup generated by $K$-rational points.
\begin{definition}\label{intersect}
  A \emph{local height pairing} on $C$,
  \begin{equation*}
    h: \Div^0(C) \times Z^0(C) \to \Qp
  \end{equation*}
  is a function (more precisely, a collection of functions, one for
  each choice of tangent vectors) satisfying the following conditions:
  \begin{enumerate}
  \item It is bi-additive, continuous and symmetric for divisors with
    disjoint support in $Z^0(C)$.
  \item  \label{second-cond} It satisfies
    \begin{equation*}
      \langle (f),  D_2\rangle = \chi(f[D_2])
    \end{equation*}
    for $f \in k(C)^*$.
  \item \label{third-cond}The dependence on the choice of tangent
    vectors is as follows: if $h'$ is the height function obtained
    from $h$ by changing $t_x$ to $\alpha t_x$, then
    \begin{equation*}
      h'(D_1,D_2) = h(D_1,D_2) + \chi(\alpha)\cdot \ord_xD_1 \cdot
      \ord_xD_2\;.
    \end{equation*}
  \end{enumerate}
\end{definition}
\begin{proposition} If $v$ does not divide $p$, then there
  exists a unique local height pairing on $C$.
\end{proposition}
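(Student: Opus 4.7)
My plan is to construct $h$ via arithmetic intersection theory on a regular proper model $\mathcal{C}/\operatorname{Spec}\OO_v$, and to establish uniqueness through a moving-lemma argument combined with the tangent-vector axiom~(\ref{third-cond}). The key simplification for $v \nmid p$ is that the continuous character $\chi = \ell_v$ must vanish on the units $\OO_v^*$ (the pro-$p$ part of $\OO_v^*$ is trivial because the residue characteristic is prime to $p$, and a continuous map from a pro-$\ell$ group with $\ell \neq p$ to $\Q_p$ is zero), whence $\chi(f) = v(f)\cdot \chi(\pi_v)$ for any uniformizer $\pi_v$; constructing $h$ reduces to constructing a $\Q$-valued symmetric bilinear form $i_v$ with $h = \chi(\pi_v)\cdot i_v$.

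For existence, on $Z^0(C)\times Z^0(C)$ with disjoint support I would use the classical Coleman--Gross recipe $i_v(D_1,D_2) := \tilde D_1 \cdot \bar D_2$, where $\bar D_2$ is the horizontal extension of $D_2$ to $\mathcal{C}$ and $\tilde D_1$ is the horizontal extension of $D_1$ corrected by a rational combination of vertical components so as to meet every special-fiber component trivially. Existence and essential uniqueness of $\tilde D_1$ (modulo the full fiber) follow from negative semi-definiteness of the vertical intersection form, with kernel spanned by the full fiber, combined with $\deg D_1 = 0$. To extend to common support at a rational point $x$ with chosen tangent $t_x$ and local parameter $z_x$ satisfying $\ptt z_x = 1$, write $D_1 = (f) + D_1'$ with $D_1'$ having support disjoint from $D_2$ -- possible because on a smooth curve every divisor class admits representatives avoiding any prescribed finite set of rational points -- and set $h(D_1,D_2) := \chi(f[D_2]) + h(D_1',D_2)$, with $f[D_2]$ computed using the tangent vectors as in the statement. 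Independence of the choice of $f$ and verification of the three axioms then follow by direct computation, using that the rescaling $t_x \mapsto \alpha t_x$ induces $z_x \mapsto \alpha^{-1} z_x$, which changes $f[x]$ by the factor $\alpha^{\ord_x f}$.

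For uniqueness, let $\delta = h - h'$ be the difference of two local height pairings. Axiom~(\ref{third-cond}) for both forces $\delta$ to be independent of the tangent vector choice, and conditions (1)--(2) make $\delta$ bi-additive, symmetric, continuous on disjoint-support pairs in $Z^0(C) \times Z^0(C)$, with $\delta((f),D_2) = 0$ for all $f \in K(C)^*$. A moving-lemma argument then shows $\delta$ vanishes on disjoint supports: reduce to the case in which $D_1$ is supported inside a fixed auxiliary residue disc disjoint from $D_2$, where bi-additivity combined with the explicit intersection-theoretic formula pins the value down. For common-support pairs, perturb the shared point $x$ to a nearby $K$-rational point in its residue disc (residue discs on a smooth curve are analytically open balls in $K$, so such points are plentiful); continuity, together with the tangent-vector correction that cancels between $h$ and $h'$, yields $\delta = 0$ in the limit. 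I expect the main technical obstacle to be verifying that the extension formula $h(D_1,D_2) := \chi(f[D_2]) + h(D_1',D_2)$ is independent of the decomposition $D_1 = (f) + D_1'$; this reduces, via bi-additivity on disjoint supports and axiom~(\ref{second-cond}) applied to ratios $f/f'$, to an elementary but fiddly intersection-theoretic verification.
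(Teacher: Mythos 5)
Your overall plan is sound and your observation that $\chi$ kills units (so $\chi=v(\cdot)\cdot\chi(\pi_v)$) is correct and is implicitly what makes the formula $h=\ell_v(\pi_v)\cdot(D_1,D_2)$ work, but you diverge from the paper in two places, one harmlessly and one in a way that introduces an avoidable gap.

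For the extension to common support, the paper does not re-derive the pairing from a decomposition $D_1=(f)+D_1'$; it cites Gross's observation that the \emph{same} arithmetic intersection number $(\tilde D_1\cdot\bar D_2)$ makes sense when the chosen tangent vectors are integral (the tangent vector selects a well-defined local intersection multiplicity at a shared point), and then extends to arbitrary tangent vectors by axiom~(3). Your route via $h(D_1,D_2):=\chi(f[D_2])+h(D_1',D_2)$ also works, and the well-definedness you flag as ``the main technical obstacle'' does go through (for two choices $f,g$ the difference is $\chi((f/g)[D_2])$ with $(f/g)$ regular and nonzero along $D_2$, so it coincides with the disjoint-support value $h((f/g),D_2)$ by axiom~(2), using the multiplicativity $(f/g)[x]=f[x]/g[x]$). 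So this part is a valid alternative, just slightly more work than quoting Gross's extended intersection pairing.

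Your uniqueness argument, on the other hand, is weaker than what the definition already gives you. For disjoint supports, uniqueness is not something to redo with a moving lemma: it is the content of Coleman--Gross Proposition~1.2 (for $v\nmid p$ it ultimately rests on the fact that a continuous homomorphism from the compact group $\operatorname{Jac}(C)(K)$, which has a pro-$\ell$ subgroup of finite index with $\ell\neq p$, to $\Q_p$ vanishes), and your sketch of reducing to divisors ``inside a fixed auxiliary residue disc'' does not obviously reproduce that argument. More importantly, your passage from disjoint to common support by ``perturb the shared point and take a limit'' is problematic: axiom~(1) only asserts continuity for disjoint supports, and the individual values $h((x')-\cdots,D_2)$ actually diverge as $x'\to x$; you would have to first argue that the divergences in $h$ and $h'$ cancel in $\delta$, which is circular with what you are trying to prove. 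The clean argument (and what the paper means by ``Condition~(2) then clearly extends the definition uniquely'') is purely algebraic: write $D_1=(f)+D_1'$ with $D_1'$ disjoint from $D_2$; then by bi-additivity $\delta(D_1,D_2)=\delta((f),D_2)+\delta(D_1',D_2)$, the second term vanishes by disjoint-support uniqueness, and the first vanishes because axiom~(2) is imposed for \emph{all} $f\in k(C)^*$, not just those with $(f)$ disjoint from $D_2$. No limit is needed.
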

\begin{proof}
For divisors with disjoint support, see \cite[Prop
1.2]{Col-Gro89} for the uniqueness. Condition~\eqref{second-cond} then clearly
extends the definition uniquely to all divisors. The local height
pairing for divisors with disjoint support can be
constructed~\cite[(1.3)]{Col-Gro89} as 
\begin{equation}
  \label{locCG}
  h({D_1},{D_2}) = \ell_v(\pi_v) \cdot ({D_1},{D_2})\;.
\end{equation}
Here, $({D_1},{D_2})$ denotes intersection multiplicity on a regular model of
$C$ over $\OO_K$ of extensions of ${D_1}$ and ${D_2}$ to this model. To make
this have the required properties one of these extensions has to have
zero intersection with all components of the special fiber. The
same formula can be used for divisors with non-disjoint support
provided that all the tangent vectors are chosen to be
integral~\cite[Section~5]{Gross86} and then extended to arbitrary tangent
vectors using part~\eqref{third-cond} of Definition~\ref{intersect}.
\end{proof}

We now turn to the case $v|p$, where a local height pairing is not
unique, but is constructed (when the support is disjoint)
in~\cite{Col-Gro89}. In this section we only describe the
local height pairing for divisors with disjoint support, deferring
treatment of the general case to Section~\ref{sec:loccol}.

Coleman and Gross describe their local height pairing in
terms of Coleman integration of differentials of the third kind. We
will need, however, the description provided in~\cite{Bes00}. To the
complementary subspace $W$ one associates a $p$-adic Green function,
which is a certain Coleman function on $C\times C-\Delta$, where
$\Delta$ is the diagonal. This Green function is defined only up to an
additive constant. However, for each divisor 
$$D= \sum m_j (y_j)$$
 of degree $0$ on $C$
the function 
$$G_D(x) = \sum_j m_j G(y_j,x)$$
is defined without ambiguity outside the support of $D$. It coincides
with the Coleman integral
used to define the height pairing in Coleman and Gross's
work~\cite[Theorem~7.3]{Bes00}. In particular, if $f$ is a rational
function on $C$ with divisor $(f)$, then
\begin{equation}\label{goff}
  G_{(f)} = \log(f) + \text{ a constant.} 
\end{equation}

Note that with this definition, $D$ should a priori be in $Z^0(C)$, but
we may define $G_D$ for any $D\in \Div^0(C)$ by extending scalars.
If $D_1$ and $D_2=\sum_i n_i (x_i)$ have disjoint support, we may define
\begin{equation*}
  G_{D_1}(D_2)= \sum_i n_i G_{D_1}(x_i),
\end{equation*}
and the local height pairing for such divisors is given by
\begin{equation*}
  h(D_1,D_2)=\tr(G_{D_1}(D_2)).
\end{equation*}

The local height pairing may be extended to divisors with non-disjoint
support in the sense of 
Definition~\ref{intersect}. Explicit formulas for this extension will
be given in Section~\ref{sec:loccol}.

Going back to the global situation, for any finite place $v$ we have,
by change of base, $F_v$-rational divisors $D_1,D_2$ of degree
$0$ on $X_v$, such that $D_1= \sum n_i (x_i)$ with $x_i$
$F_v$-rational. The local height $h_v=h$ on $X_v$ depends on choices
of tangent vectors, but condition~\eqref{third-cond} in
Definition~\ref{intersect} implies that they are only required at the
points of the common support of $D_1$ and $D_2$, and there we have
made a global choice of a tangent vector. We can therefore define
\begin{equation*}
  h(D_1,D_2) = \sum_v h_v(D_1,D_2)\;.
\end{equation*}
Condition~\eqref{third-cond} in
Definition~\ref{intersect} and the fact that $\ell$ is an idele class
character imply now that $h$ is independent of the choices of tangent
vectors and it furthermore factors via the Jacobian of $X$.

\section{The local height as a Coleman function}
\label{sec:loccol}

In this section we deal with the $p$-adic analysis of the local height
pairing at primes above $p$. We will be using some facts from $p$-adic
Arakelov theory, as developed in~\cite{Bes00}. We therefore assume now that $C$ is a complete curve over the algebraic closure $\Qpb$ of $\Qp$.

We will be using the theory of higher Coleman integration, as developed by
Vologodsky~\cite{Vol01}, and discussed further in~\cite{Bes00}. We
will recall necessary facts when needed. For now, suffice it to say
that the theory, dependent on the choice of a branch of the $p$-adic
logarithm, provides a ring $\ocol(U)$ of so-called Coleman functions
on any smooth algebraic variety $U/\Qpb$ containing the ring of
regular functions $\OO(U)$ on $U$,
whose members are locally analytic, and that the differential induces
a short exact sequence
\begin{equation*}
  0\to \Qpb \to \ocol(U) \to (\ocol(U)\otimes_{\OO(U)}
  \Omega^1(U))^{d=0} \to 0
\end{equation*}
(in
this section differential forms and de Rham cohomologies are over $\Qpb$).
Finally, there is a compatible action of $\operatorname{Gal}(\Qpb/\Qp)
$ on this short exact sequence.

We begin by recalling the definition of the constant term of a Coleman
function~\cite{Bes-deJ02}, \cite[Definition~3.1]{Bes-Fur03}.
If $H$ is a Coleman function on $C$, then, locally, near
a point $x\in C$ with local parameter $z$, $H$ can be written as a
polynomial $\sum_{i\ge 0} h_i \log(z)^i$, with $h_i$ meromorphic at
$x$. 
\begin{definition}
The constant term in a Laurent expansion of $h_0$ with respect
to $z$ is called the constant term of $H$ with respect to the parameter
$z$, denoted $c_z(H)$.
\end{definition}

\begin{lemma}\label{logbehav}
  Suppose $dF$ is meromorphic in a neighborhood of $x$ with a simple
  pole at $x$ with residue $r$. Let $t$ be a tangent vector at $x$. Let
  $z$ and $z'$ be two local
  parameters at $x$ with the relation $\ptt z' = \beta \ptt z$. Then
  we have
  \begin{equation*}
    c_z(H)=c_{z'} (H)+r\log(\beta)\;.
  \end{equation*}
  In particular, the \emph{constant term with respect to $t$},
  \begin{equation*}
    H(x,t) := c_z(H)\;,\text{ with $z$ such that $\ptt z =1 $}
  \end{equation*}
  is well-defined, independent of the particular choice of $z$, and in
  addition we have, for any $\alpha \ne 0$
  \begin{equation*}
    H(x,\alpha t) = H(x,t) + r \log(\alpha).
  \end{equation*}
\end{lemma}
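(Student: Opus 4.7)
The plan is first to use the hypothesis on $dH$ to reduce $H$ to a transparent local form, and then to compute directly how the constant term transforms under a change of parameter. For Step~1, since $dH$ has residue $r$ at $x$, the form $dH - r\,dz/z$ has residue zero there; being at worst a simple-pole form with zero residue, it is regular at $x$. A Coleman function whose differential is analytic on a residue disk is itself analytic (by the short exact sequence for $\ocol(U)$ recalled just above), so locally
$$H = h_0(z) + r \log(z)$$
with $h_0$ analytic at $x$. In particular the ``constant term'' $c_z(H)$ is just the value $h_0(x)$.

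For Step~2, given a second parameter $z'$ with $\ptt z' = \beta \ptt z$, write $z' = \beta z + O(z^2)$, so that $z'/(\beta z)$ is an analytic unit at $x$ taking the value $1$. Hence $g(z) := \log(z'/(\beta z))$ is analytic and vanishes at $x$. Additivity of the chosen branch of the $p$-adic logarithm gives
$$\log(z') = \log(z) + \log(\beta) + g(z),$$
and substitution into the normal form of Step~1 yields
$$H = \bigl[h_0(z) - r \log(\beta) + r g(z)\bigr] + r \log(z').$$
The bracketed function is analytic at $x$ with value $h_0(x) - r \log(\beta)$ there, so $c_{z'}(H) = c_z(H) - r \log(\beta)$, proving the first claim.

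For Step~3, the remaining statements drop out immediately. If both $z$ and $z'$ satisfy $\ptt z = \ptt z' = 1$, then $\beta = 1$ and $\log(\beta) = 0$, so the constant terms agree and $H(x,t)$ is well-defined. A parameter normalized for $\alpha t$ satisfies $\ptt z' = 1/\alpha$, hence $\beta = 1/\alpha$, and Step~2 gives $H(x,\alpha t) = H(x,t) + r \log(\alpha)$. The only delicate point is the normal form of Step~1 — specifically, ruling out higher powers of $\log(z)$ in the local expansion of $H$; once $H = h_0(z) + r \log(z)$ with $h_0$ analytic is in hand, the rest is formal manipulation of $\log$.
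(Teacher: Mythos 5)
Your proof is correct and follows essentially the same route as the paper: reduce $H$ to the local normal form $h_0(z) + r\log(z)$ with $h_0$ analytic, then compare under a change of parameter by expanding $\log(z')$. The paper simply asserts this normal form, whereas you justify it by noting that $dH - r\,dz/z$ has a simple pole with zero residue, hence is regular, and invoking the exact sequence for $\ocol$; this is a welcome bit of extra care but not a different argument. Your handling of the sign ($\beta = 1/\alpha$ when passing from $t$ to $\alpha t$) is also correct, matching what the paper leaves implicit in ``from which the result is clear.''
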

\begin{proof}
For a local parameter $z$,  we have near $x$, $H=h+r \log(z)$, with
$h$ analytic, hence $c_z(H)= h(x)$. We have 
$$z'/z = \beta \cdot (1+g(z))$$
 with $g$ analytic vanishing at $x$.
Thus we have
\begin{equation*}
  H=h'+ r \log(z') = h'+ r(\log(z) + \log(\beta) + \log(1+g(z)))\;.
\end{equation*}
Since $\log(1+g(z)) $ is analytic near $x$ and vanishes at $x$, we find
$h=h'+r\log(\beta)+ r\log(1+g(z))$ and $h(x)=h'(x)+r\log(\beta)$, from
which the result is clear.
\end{proof}

For any $x\in C$ consider the function $G_x(y) := G(x,y)$ on
$C$.
\begin{lemma}\label{greenbehav}
  The form $dG_x$ is meromorphic near $x$, with a simple pole of residue $1$. In
  other words, it satisfies the condition of
Lemma~\ref{logbehav} with $r=1$.
\end{lemma}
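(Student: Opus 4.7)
The plan is to reduce to the characterization \eqref{goff} of the Green function by comparing $G_x(y)$ locally with $\log(f(y))$ for a rational function $f$ with a simple zero at $x$. I pick $f \in \Qpb(C)$ with $\ord_x(f) = 1$, which exists since $\OO_{C,x}$ is a discrete valuation ring sitting inside the function field, and write $(f) = (x) + D'$, so that $D'$ has degree $-1$ and does not contain $x$ in its support.

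By the definition of $G_D$ for degree-zero divisors and by \eqref{goff},
\begin{equation*}
G_x(y) + G_{D'}(y) = G_{(f)}(y) = \log(f(y)) + c
\end{equation*}
for some constant $c$, valid on a punctured neighborhood of $x$ (the ambiguous overall additive constant in the Green function enters on the left-hand side with canceling signs, and drops out after differentiation in any case). Differentiating in $y$ yields
\begin{equation*}
dG_x = \frac{df}{f} - dG_{D'}
\end{equation*}
near $x$. The first term is meromorphic with a simple pole of residue $\ord_x(f) = 1$ at $x$. For the second, the $p$-adic Arakelov construction \cite{Bes00} presents $G_{D'}$ as the Coleman primitive of the canonical meromorphic differential of the third kind $\omega_{D'}$ whose residue divisor is $D'$ (and whose holomorphic projection lies in $W$); hence $dG_{D'} = \omega_{D'}$, which is regular at $x$ since $x \notin \operatorname{supp}(D')$. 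Combining, $dG_x$ is meromorphic near $x$ with a simple pole of residue $1$, as claimed.

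The only non-routine ingredient is the identification $dG_{D'} = \omega_{D'}$, which is built into the definition of the $p$-adic Green function in \cite{Bes00}; everything else is direct differentiation of a rational-function identity. I expect this to be the main (mild) obstacle, since a priori $G_{D'}$ is only a Coleman function on $C \setminus \operatorname{supp}(D')$ and could conceivably carry logarithmic contributions near $x$, but the Arakelov construction rules this out precisely because the associated differential of the third kind has no pole at $x$.
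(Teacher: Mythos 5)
Your approach differs from the paper's, which works directly with the log function machinery: since $G_x = \log_{\OO(x)}(1)$ and $z^{-1}\cdot 1$ extends to a non-vanishing section of $\OO(x)$ near $x$, the defining properties of a log function in \cite{Bes00} give immediately that $G_x - \log(z)$ is analytic near $x$. Your route instead tries to use a rational function $f$ and the relation \eqref{goff}; this would be a nice elementary reduction if the key step went through, but there is a genuine gap in it.

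The gap is in the claim $dG_{D'} = \omega_{D'}$. You set $D' = (f) - (x)$, which has degree $-1$. There is no meromorphic differential of the third kind on a complete curve whose residue divisor has nonzero degree — the residue theorem forces the residues to sum to zero — so there is no $\omega_{D'}$ with residue divisor $D'$, and the Arakelov/Coleman--Gross identification of $G_D$ with a Coleman primitive of a canonical third-kind form is only made (both in \cite{Col-Gro89} and in \cite[Theorem 7.3]{Bes00}) for $\deg D = 0$. In other words, $G_{D'}$ for a degree $-1$ divisor is not the primitive of a meromorphic form, so you cannot conclude that $dG_{D'}$ is meromorphic, let alone regular, at $x$.

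Replacing $D'$ by a degree-zero divisor does not rescue the argument: if you instead write $dG_x = dG_{x_0} + \omega_{(x)-(x_0)}$ for an auxiliary point $x_0 \ne x$, then $\omega_{(x)-(x_0)}$ does have a simple pole of residue $1$ at $x$, but you are left needing to show that $dG_{x_0}$ is analytic near $x$. This is precisely the ``off-diagonal regularity'' of the Green function, and it is of the same nature as the statement you are trying to prove — it again requires knowing that the restriction of a log function to a local non-vanishing section is analytic, which is the input the paper invokes. So the reduction doesn't bypass the log function theory; it just relocates the appeal to it, and your write-up omits that appeal.
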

\begin{proof}
This is essentially obvious from the theory in~\cite{Bes00} but is not
stated there explicitly. From the definition of $G$ it follows that
$G_x= \log(1)$, with $1$ being the canonical section of $\OO(x)$ (for
the terminology of log functions see below). If
$z$ is a local parameter at $x$, then $z^{-1} \cdot 1$ extends to
a non-vanishing
section of $\OO(x)$ near $x$. From the properties of log functions it
follows that $G_x-\log(z)$ extends to an analytic function near $x$,
proving the result.
\end{proof}

At this point we can construct the extension of the Coleman-Gross
local height pairing to divisors with non-disjoint support.
\begin{proposition}
The formulas, for  $D_1$ and $D_2=\sum_i n_i (x_i)$,
\begin{equation*}
  G_{D_1}[D_2]= \sum_i n_i G_{D_1}(x_i,t_{x_i})
\end{equation*}
and
\begin{equation*}
  h(D_1,D_2)=\tr(G_{D_1}[D_2]),
\end{equation*}
define a local height pairing in the sense of
Definition~\ref{intersect}.
\end{proposition}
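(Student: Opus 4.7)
The plan is to verify the three conditions of Definition~\ref{intersect} in turn. Condition~(1)---bi-additivity, continuity and symmetry for disjoint-support divisors in $Z^0(C)$---comes essentially for free: if $D_1$ and $D_2=\sum_i n_i(x_i)$ have disjoint support, then each $x_i$ lies outside the support of $D_1$, so $G_{D_1}$ is analytic at $x_i$ and its constant term $G_{D_1}(x_i,t_{x_i})$ coincides with the ordinary evaluation $G_{D_1}(x_i)$. Thus on disjoint-support divisors the proposed formula reduces to the original Coleman-Gross pairing recalled just before the proposition, for which (1) is already known. The real content of the proposition is therefore in conditions \eqref{second-cond} and \eqref{third-cond}.

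For condition \eqref{second-cond}, I take $f \in k(C)^*$ and use \eqref{goff} to write $G_{(f)} = \log(f) + c$ for some constant $c$. Near any $x_i$ I choose a parameter $z$ with $\ptt z = 1$ and factor $f = z^{m_i} u$ with $m_i = \ord_{x_i}(f)$ and $u$ a unit satisfying $u(x_i)=f[x_i]$. Then $G_{(f)} = m_i \log(z) + \log(u) + c$ as a Coleman function near $x_i$, so reading off the constant term of $h_0 = \log(u)+c$ gives $G_{(f)}(x_i,t_{x_i}) = \log(f[x_i])+c$. Summing against $D_2$ and using $\deg D_2 = 0$ to absorb the $c$'s yields $G_{(f)}[D_2] = \log(f[D_2])$, whence $h((f),D_2) = \tr(\log(f[D_2])) = \chi(f[D_2])$ as required.

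For condition \eqref{third-cond}, I fix a point $x$ and replace $t_x$ by $\alpha t_x$. I apply Lemma~\ref{logbehav} to $H = G_{D_1}$: the shift in the constant term is $r \log(\alpha)$, where $r$ is the residue of $dG_{D_1}$ at $x$. Writing $D_1 = \sum_j m_j(y_j)$, linearity of $G_{D_1}$ in $D_1$ together with Lemma~\ref{greenbehav} identifies this residue as exactly $\ord_x(D_1)$. Hence $G_{D_1}(x,\alpha t_x) = G_{D_1}(x,t_x) + \ord_x(D_1)\cdot\log(\alpha)$. This shift only enters the sum $G_{D_1}[D_2]=\sum_i n_i G_{D_1}(x_i,t_{x_i})$ if some $x_i$ equals $x$, in which case it is multiplied by $n_i = \ord_x(D_2)$; applying $\tr$ gives the prescribed change $\chi(\alpha)\cdot\ord_x(D_1)\cdot\ord_x(D_2)$ in $h(D_1,D_2)$.

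The step I expect to be the main obstacle is the residue computation in condition \eqref{third-cond}, since this is where one must confirm that the constant-term formalism of Lemma~\ref{logbehav} applies globally to $G_{D_1}$ rather than just to its ``log part'' at $x$. However, this is precisely what Lemma~\ref{greenbehav} provides, so in the end everything reduces to linearity of $G$ in its divisor argument together with the two lemmas already established.
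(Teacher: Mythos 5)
Your proof is correct and follows the same route as the paper's, which is a one-line argument stating exactly the chain of observations you spell out: condition~(1) reduces to the known disjoint-support pairing, condition~\eqref{second-cond} comes from $G_{(f)}[D_2]=\log(f[D_2])$ via~\eqref{goff}, and condition~\eqref{third-cond} is a consequence of Lemmas~\ref{greenbehav}~and~\ref{logbehav}. You have simply filled in the details that the paper declares ``essentially clear,'' including the degree-zero cancellation of the additive constant in condition~\eqref{second-cond} and the residue bookkeeping in condition~\eqref{third-cond}.
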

\begin{proof}
Everything is essentially clear. One just needs to observe that as a
consequence of~\eqref{goff} we have $G_{(f)}[D_2]=\log (f[D_2])$, and
that condition~\eqref{third-cond} in Definition~\ref{intersect} is an immediate consequence of
Lemma~\ref{greenbehav} and Lemma~\ref{logbehav}.
\end{proof}

We now turn to a more detailed analysis of the local height pairing.
\begin{definition}\label{lotidef}
  For a point $x\in C$ and a tangent vector $t$ at $x$ define
  \begin{equation}
    \loti(x,v)=
    G_x(x,v).
  \end{equation}
\end{definition}
Lemmas \ref{greenbehav}~and~\ref{logbehav} imply that $\loti$
 is a quasi-log function, in the sense of~\cite[Definition~4.1]{Bes00},
on the tangent line bundle $\tang$ on $C$. Recall that a quasi-log function on
a line bundle is a function on the total space minus the zero section
which on every fiber is the $p$-adic logarithm (as chosen) plus a
constant. A quasi-log
function is a log-function if it is furthermore a Coleman function
with some additional properties. In~\cite[Theorem~5.10]{Bes00} we constructed a
log-function $\lgod$, unique up to an additive constant, on the line
bundle $\odi$, with certain properties. The Green function $G$ is then
\begin{equation}
  \label{Gdef}
  G = \lgod(1) \text{ on } C\times C-\Delta,
\end{equation}
where $1$ is  the canonical section of $\odi$  on $C\times C-\Delta$.

For a line bundle $\LL$ let $\LL\invv:=\operatorname{Hom}(\LL,\OO)$ be
the inverse line
bundle. Given a log function on $\LL$ there is an induced one on
$\LL\invv$, which can be described by the condition that the sum of
the logs of two dual vectors
(two vectors pairing to $1$) is $0$.

Recall that $\odii|_\Delta$ may be identified with the canonical
bundle $\cano$ of $C$ as follows: let 
$z$ be a local parameter at the neighborhood $U$ of some point $x_0$ on
$C$. The function
$z(x)-z(y)$ has a simple zero on $\Delta$ at $U\times U$, hence defines
a non-vanishing section of $\odii$. Its restriction to $\Delta$ gets
mapped to the section $dz$ of $\cano$. By duality, $\odi|_\Delta$ is
identified with $\tang$.
\begin{proposition}
  The restriction of $\lgod$ to $\Delta$ is equal, via the above
  identification, to the quasi-log function $\loti$ defined
  in~\eqref{lotidef}, which is therefore a log function.
\end{proposition}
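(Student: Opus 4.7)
The plan is to compute $\lgod$ explicitly near $\Delta$ using a local trivialization, and then evaluate at $\Delta$ to match $\loti$.

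Fix $x_0 \in C$ with a local parameter $z$ on a neighborhood $U$. On $U\times U$, the function $z(x)-z(y)$ vanishes simply along $\Delta$, so $s := (z(x)-z(y))\inv$ is a nowhere-vanishing local section of $\odi$, and the canonical section satisfies $1 = (z(x)-z(y))\cdot s$. The log-function property of $\lgod$ then yields
\begin{equation*}
  G = \lgod(1) = \log(z(x)-z(y)) + \lgod(s) \qquad \text{on } (U\times U)\setminus \Delta.
\end{equation*}
Equivalently, $\lgod(s) = G - \log(z(x)-z(y))$, and since $s$ is nowhere-vanishing on $U\times U$, the left side is a well-defined Coleman function on all of $U\times U$; the log-singularities of the two terms on the right cancel along $\Delta$.

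Next I will read off the restriction to $\Delta$. Fix $x \in U$ and the tangent vector $t = \partial/\partial z$ at $x$, so that $\ptt z = 1$. Setting $w(y) := z(y)-z(x)$ gives a local parameter at $y=x$ with $\ptt w = 1$ and $z(x)-z(y) = -w$. Since $\log(-1)=0$ for any branch of the $p$-adic logarithm (as $-1$ is a root of unity), we obtain
\begin{equation*}
  G_x(y) = \log(w) + \lgod(s)(x,y),
\end{equation*}
with $\lgod(s)(x,\cdot)$ regular at $y = x$. Combined with Lemma~\ref{greenbehav} (giving residue $r=1$), Lemma~\ref{logbehav} then identifies the constant term of $G_x$ with respect to $w$ as
\begin{equation*}
  \loti(x,t) = G_x(x,t) = c_w(G_x) = \lgod(s)(x,x).
\end{equation*}

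Finally, I will match this with $\lgod|_\Delta$ via the duality. Under the identification $\odii|_\Delta \cong \cano$ sending $(z(x)-z(y))|_\Delta \mapsto dz$, the dual identification $\odi|_\Delta\cong \tang$ sends $s|_{(x,x)} \mapsto \partial/\partial z = t$. Therefore $(\lgod|_\Delta)(t) = \lgod(s)(x,x) = \loti(x,t)$, establishing the proposition. The final assertion that $\loti$ is a log function is then automatic: it is the pullback of the log function $\lgod$ along the embedding of the sub-line-bundle $\odi|_\Delta\hookrightarrow \odi$, and the defining properties of a log function are preserved under such a pullback.

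The only real care needed is in the third paragraph: correctly matching the dualities so that the normalization $\ptt z = 1$ used to define the constant term $\loti(x,t)$ corresponds precisely to the element $s|_{(x,x)}$ at which we are evaluating $\lgod|_\Delta$. Everything else is immediate from the defining equation $G = \lgod(1)$ and the lemmas already proved.
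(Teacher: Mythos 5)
Your proof is correct and essentially follows the paper's argument, just phrased on the dual line bundle: you evaluate $\lgod$ on the section $s=(z(x)-z(y))\inv$ of $\odi$, whereas the paper evaluates $\lgodi$ on the dual section $z(x)-z(y)$ of $\odii$ (producing $u=-\lgod(s)$) and then passes through $\log_{\cano}(dz)$ before invoking duality. The change of local parameter $w=z(y)-z(x)$, the use of $\log(-1)=0$, and the identification $s|_\Delta\mapsto t$ all match what the paper does with signs reversed, and your concluding remark that restriction of a log function along $\Delta\hookrightarrow C\times C$ is again a log function is the same (implicit) justification the paper uses for the final assertion.
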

\begin{proof}
Fix a point $x_0\in C$ and a tangent vector $t$ at $x_0$.
Let $z$ be a local parameter at a point $x_0$ such that $\ptt z =
1$. As above, $z(x)-z(y)$ defines a non-vanishing section of
$\odii$. We find, for $x\ne y$ in a neighborhood of $x_0$,
\begin{align*}
  u(x,y):&=\lgodi(z(x)-z(y)) &\\
         &=\log(z(x)-z(y))+\lgodi(1) &\text{by the behavior of log} \\
         &=\log(z(x)-z(y))-\lgod(1) &\text{since $1\in \odi$ is dual to
           $1\in \odii$}\\
         &=\log(z(x)-z(y))-G(x,y) &\text{by~\eqref{Gdef}.}
\end{align*}
The function $u$ extends to $(x_0,x_0)$, and by the
identification above, its value there is $\log_{\cano}(dz)(x_0)$.
Fixing $y=x_0$ we find
\begin{equation*}
  G_{x_0}(x) = -u(x,x_0)+ \log(z),
\end{equation*}
and by Definition~\ref{lotidef} we find
\begin{equation*}
  \loti(x_0,t) = -u(x_0,x_0)= -\log_{\cano}(dz)(x_0),
\end{equation*}
and the proof is complete upon noting that $dz$ at $x_0$ is dual, by
definition, to $t$.
\end{proof}
We next recall~\cite[Proposition~4.4]{Bes00} that a log function on a
line bundle on a variety $U$ has (not always,
but for example if $U$ is a complete smooth curve with positive genus) a
curvature, which is an element of $\hdr^1(U)\otimes \Omega^1(U)$. It
is defined as the unique such
element whose pullback to the total space of the line bundle minus the
zero section gives the $p$-adic $\delbar$ operator
(\cite[Section~6]{Bes99}~and~\cite[p. 323]{Bes00})
applied to the
differential of the log function (we will recall later the definition
of the $p$-adic $\delbar$ operator in an affine situation when we need
it).
\begin{remark}\label{anysec}
  When $U$ is affine, the curvature is also $\delbar\circ d $ of the
  log of any non-vanishing section.
\end{remark}
 We now identify the curvature for $\loti$. To do so, recall first that
as part of the data for the height pairing we are given a
decomposition $\hdr^1(C) = W\oplus \Omega^1(C)$. If $\{\omega_i,
i=1,\ldots, g\}$  is a basis for
  $\Omega^1(C)$ there exists a unique choice of a basis $\{\bom_i,
i=1,\ldots, g\}$ for $W$ which is dual to it via the cup product,
i.e., $\bom_i \cup \omega_j = \delta_{ij}$. We have

\begin{proposition}
  For any choice of a basis $\{\omega_i, i=1,\ldots, g\}$ as above the
  curvature of $\loti$ is given by
  \begin{equation*}
    curve(\loti) = (2-2g)\mu\;,\text{ with }
    \mu = \frac{1}{g} \sum_{i=1}^g \bom_i \otimes \omega_i\;.
  \end{equation*}
\end{proposition}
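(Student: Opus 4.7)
The plan is to use the preceding proposition to reduce the computation to one on $C\times C$, and then restrict to the diagonal. Under the canonical identification $\odi|_\Delta\cong\tang$, that proposition identifies $\loti$ with the pullback $i^*\lgod$, where $i\colon\Delta\hookrightarrow C\times C$ is the diagonal inclusion. The local characterization of the curvature via Remark~\ref{anysec} is natural with respect to pullback along closed immersions, since both the de Rham differential and the $p$-adic $\delbar$-operator commute with such pullbacks. Consequently
\begin{equation*}
\operatorname{curv}(\loti) \;=\; (i\otimes i)^*\operatorname{curv}(\lgod)\;\in\;\hdr^1(C)\otimes\Omega^1(C).
\end{equation*}

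The second step is to identify $\operatorname{curv}(\lgod)$ on $C\times C$. By Remark~\ref{anysec}, on any affine open $U\subset C\times C-\Delta$ the canonical section $1\in\odi$ is non-vanishing, so $\operatorname{curv}(\lgod)|_U = \delbar\,d\,G$ where $G=\lgod(1)$ is the Green function. Using the construction of $G$ from~\cite{Bes00} in terms of the dual bases $\{\omega_i\}$ of $\Omega^1(C)$ and $\{\bom_i\}$ of $W$, one expands $\delbar dG$ in the Künneth decomposition of $\hdr^1(C\times C)\otimes\Omega^1(C\times C)$. The contributions split into the two pullbacks $\pi_1^*\mu$ and $\pi_2^*\mu$, together with a cross-term accounting for the diagonal class $[\Delta]$.

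Pulling back along $i$ collapses the Künneth factors, since $i^*\pi_1^* = i^*\pi_2^* = \mathrm{id}$: each $\pi_j^*\mu$ contributes $\mu$, while the diagonal cross-term should contribute $-2g\,\mu$, summing to $(2-2g)\mu$. The resulting factor $2-2g$ is consistent with $\deg\tang = 2-2g$ and the self-intersection $\Delta\cdot\Delta = 2-2g$, both of which it must match on cohomological grounds.

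The main obstacle is executing step two: calculating $\delbar dG$ explicitly from the defining characterization of the Green function, and verifying that the cross-term contributes precisely $-2g\,\mu$ rather than some other multiple after restriction. This amounts to a local computation near a point of $\Delta$, using a coordinate $z$ with $\ptt z=1$ and the behavior of $G$ near the diagonal supplied by Lemma~\ref{greenbehav}, together with the duality $\bom_i\cup\omega_j=\delta_{ij}$ which pins down the coefficient.
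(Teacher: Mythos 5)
Your overall strategy is exactly the paper's: identify $\loti$ with the restriction of $\lgod$ to the diagonal, and use that curvature commutes with restriction. Where you diverge—and where the gap lies—is in step two. The paper does not compute the curvature of $\lgod$ at all; it simply cites~\cite[Definition~5.1 and Theorem~5.10]{Bes00}, which give the explicit formula
\begin{equation*}
   \pi_1^\ast \mu + \pi_2^\ast \mu
    - \sum_{i=1}^g \left(\pi_1^\ast \bom_i \otimes  \pi_2^\ast\omega_i
      +\pi_2^\ast \bom_i \otimes  \pi_1^\ast\omega_i
    \right),
\end{equation*}
and then pulls back to the diagonal. Your proposal instead tries to recover this curvature directly via Remark~\ref{anysec}, and you explicitly flag that you have not carried out this computation. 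That is the missing ingredient.

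Moreover, the method you sketch for that step does not quite work as stated: Remark~\ref{anysec} requires a non-vanishing section on an affine open, but the canonical section $1$ of $\odi$ vanishes along $\Delta$, so computing $\delbar\, dG$ on affine opens contained in $C\times C - \Delta$ gives you the curvature only there. You then cannot ``restrict to the diagonal,'' since $\Delta$ is not in the domain; one would first need to argue that the class so obtained extends uniquely over $\Delta$ to the globally defined curvature of $\lgod$, or else use a different local trivialization near $\Delta$. The paper bypasses all of this by quoting the global curvature formula. Your predicted form (two projection terms plus a cross-term restricting to $-2g\mu$) is correct, and the final arithmetic $2\mu - 2g\mu = (2-2g)\mu$ matches; but until the curvature of $\lgod$ is actually established, the argument has a hole that the paper fills by citation.
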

\begin{proof}
According to~\cite[Definition~5.1 and Theorem 5.10]{Bes00}, the
curvature of $\lgod$ is
\begin{equation*}
   \pi_1^\ast \mu + \pi_2^\ast \mu
    - \sum_{i=1}^g \left(\pi_1^\ast \bom_i \otimes  \pi_2^\ast\omega_i
      +\pi_2^\ast \bom_i \otimes  \pi_1^\ast\omega_i
    \right),
\end{equation*}
where $\pi_i$ are the two projections from $C\times C$ to $C$.
Since the curvature behaves nicely with respect to restriction, the
result follows immediately by pulling back to the diagonal.
\end{proof}
\begin{remark}
Note that the corollary is compatible
with~\cite[Proposition~4.4]{Bes00} in the
sense that applying the cup product to the curvature one is expected
to get the degree of the line bundle, and applying it to $\mu$ one
gets $1$.
\end{remark}

We next fix $x_0\in C$ and study the local height function
$h_v(x-x_0,x-x_0)$ itself. As
in the introduction we fix a tangent vector $t_0$ at $x_0$ and the
local height function depends on $x$ and on a tangent vector $t$ at
$x$. Is is easily seen to be a quasi-log function on $\tang|_{C-x_0}$,
and we denote it by $\hoti$.
We have
\begin{equation*}
  \hoti(x,t) = G_x(x,t)-2G_{x_0}(x) + G_{x_0}(x_0,t_0)\;.
\end{equation*}
Since $G_{x_0}$ is the log of the section $1$ of $\OO(x_0)$, we can
interpret $\hoti$, ignoring the last summand which is a
constant, as the pullback of the log function on $\tang \otimes
\OO(-2x_0)$ under the isomorphism of the above with $\tang$ on $C-x_0$
provided by the canonical section $1$. We have the following
information about $\hoti$, which we know how to use to fully
characterize it only in the genus $1$ case.
\begin{proposition}\label{gendata}
  The curvature of $\hoti$ is $-2g\mu|_{C-x_0}$. If $t(x)$ is a
  section of $\tang$ near $x_0$ whose value at $x_0$ is $t_0$, then
  the pullback of $\hoti$ under $t$ has constant term $0$ with respect
  to $t_0$.
\end{proposition}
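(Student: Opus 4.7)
The plan is to treat both assertions by viewing $\hoti$ as, up to an additive constant, the log function on the line bundle $\tang \otimes \OO(-2x_0)$ on $C - \{x_0\}$, transported to $\tang|_{C-\{x_0\}}$ via the canonical section $1^{\otimes 2}$ of $\OO(2x_0)$ away from $x_0$. Additivity of the log-function construction under tensor product (with the duality convention flipping the sign of both the log function and its curvature) makes this a transparent consequence of the identity $\hoti(x,t) = \loti(x,t) - 2G_{x_0}(x) + G_{x_0}(x_0,t_0)$ together with $G_{x_0} = \log_{\OO(x_0)}(1)$.

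For the curvature, the previous proposition gives that the curvature of $\loti$ is $(2-2g)\mu$, and the curvature of $\log_{\OO(x_0)}$ on $C - \{x_0\}$ equals $\mu$. The latter is obtained by pulling back the explicit formula for the curvature of $\lgod$ on $C \times C - \Delta$ (recalled in the proof of the previous proposition) along the inclusion $x \mapsto (x, x_0)$: in that pullback the terms involving $\pi_2^{\ast}$ on either the cohomology factor or the $\Omega^1$-factor vanish because they come from the constant map to $\{x_0\}$, leaving only $\pi_1^{\ast}\mu = \mu$. Combining these contributions via additivity, with the sign flip from dualization of $\OO(x_0)$ to $\OO(-2x_0)$, yields $(2-2g)\mu - 2\mu = -2g\mu$, as claimed.

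For the constant-term statement, I would reuse the auxiliary function $u(x,y) = \log(z(x) - z(y)) - G(x,y)$ from the proof of the previous proposition, with $z$ a local parameter at $x_0$ normalized so that $\ptt z = 1$ (for $t = t_0$) and $z(x_0) = 0$. Recall that $u$ extends analytically across the diagonal with $u(x_0,x_0) = -\loti(x_0, t_0)$. Write $t(x) = \alpha(x)\,(\partial/\partial z)|_x$ with $\alpha(x_0) = 1$. Lemma \ref{logbehav}, combined with the residue identification of Lemma \ref{greenbehav}, then gives
\[
\loti(x, t(x)) = -u(x,x) + \log(\alpha(x)),
\]
while the symmetry of $G$ and the same residue computation give $G_{x_0}(x) = \log(z) - u(x, x_0)$. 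Substituting into the defining expression for $\hoti$, the pullback $t^{\ast}\hoti$ takes the form $h_0(x) - 2\log(z)$ with
\[
h_0(x) = -u(x,x) + \log(\alpha(x)) + 2u(x, x_0) + G_{x_0}(x_0, t_0)
\]
analytic near $x_0$. Evaluating at $x_0$, using $\alpha(x_0) = 1$ together with $G_{x_0}(x_0, t_0) = \loti(x_0, t_0) = -u(x_0,x_0)$, every term cancels and $h_0(x_0) = 0$, which is the required vanishing of the constant term with respect to $t_0$.

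The principal thing to watch is bookkeeping: one must track the sign conventions around dualization of log functions (the curvature flips sign under inversion of a line bundle), the branch of logarithm (so that $\log(-1) = 0$ and $\log(z(x)-z(y))$ may be freely interchanged with $\log$ of a local parameter on the diagonal), and the indexing convention $G_x(y) = G(x,y)$ together with the symmetry of $G$. With these in place both statements reduce to the direct substitutions above.
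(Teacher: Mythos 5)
Your proposal is correct and follows essentially the same route as the paper: both arguments decompose $\hoti$ as $\loti - 2G_{x_0} + G_{x_0}(x_0,t_0)$, obtain the curvature by additivity from the curvatures of $\loti$ and $\log_{\OO(x_0)}$, and check the constant-term vanishing by summing contributions from the three pieces. The only deviations are cosmetic: you rederive the fact that the curvature of $\log_{\OO(x_0)}$ is $\mu|_{C-x_0}$ by restricting the explicit curvature formula for $\lgod$ along $x\mapsto(x,x_0)$, whereas the paper simply cites \cite[Corollary~5.12 and Definition~5.7]{Bes00}; and you work out the constant term via the auxiliary function $u$, whereas the paper observes directly that the analytic function $G_x(x,t(x))$ takes value $G_{x_0}(x_0,t_0)$ at $x_0$, which is also the constant term of $G_{x_0}$ with respect to $t_0$. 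Both details are sound, and your $u$-based computation unpacks the paper's one-line observation in a way that is perhaps more transparent.
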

\begin{proof}
The first statement follows because the curvature of $\OO(x_0)$ is
$\mu$ by~\cite[Corollary~5.12 and Definition~5.7]{Bes00}. For the
second statement we observe that
$G_x(x,t(x))$ has value $ G_{x_0}(x_0,t_0)$ at $x_0$, and that this is
by definition also the constant term of $G_{x_0}(x)$ with respect to $t_0$. 
\end{proof}

\section{The local height for elliptic curves}
\label{sec:elliptic}

In this section we specialize the consideration in the previous
section to the case of
an elliptic curve $E$. We fix $x_0$ to be the identity element $0$ of
the group law. We furthermore fix an invariant differential
$\omega$. This provides us by duality a canonical invariant section
$v$ of the
tangent bundle. We may use this to pullback the log function $\hoti$ to
obtain a Coleman function $\tau$ on $E-0$. Let $\bar{\omega}\in W$ be
the dual class.
\begin{theorem}\label{mainB}
  Let $\eta$ be the unique one-form of the second kind on $E$ with a double pole
  at $0$ and no other poles, representing the cohomology class
  $\bar{\omega}$. Then $\tau$ is the unique Coleman integral of the form
  \begin{equation*}
    \tau = -2\int \left(\omega \times \int \eta \right)
  \end{equation*}
  which is in addition a symmetric function and $\tau(0,v)=0$.
\end{theorem}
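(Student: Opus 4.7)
The strategy is to verify that $\tau$ satisfies each of the three listed conditions—being a Coleman double integral of the form $-2\int(\omega \times \int \eta)$, being symmetric, and $\tau(0,v)=0$—and then to observe that together these conditions determine such a function uniquely. The main input is Proposition~\ref{gendata} specialized to the elliptic case $g=1$: the curvature of $\tau$ equals $-2\mu = -2\,\bar{\omega} \otimes \omega$, and the pullback of $\hoti$ along any section of $\tang$ taking value $v$ at $0$ has constant term $0$ there, which is exactly $\tau(0,v)=0$.

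For the integral representation, I would choose any Coleman primitive $F_\eta$ of $\eta$ on $E-0$. Since $[\eta] = \bar{\omega}$ in de Rham cohomology, one has $\delbar(-2F_\eta\,\omega) = -2\,\bar{\omega}\otimes\omega$, matching $\delbar(d\tau)$. Therefore $\psi := d\tau + 2F_\eta\,\omega$ is a Coleman $1$-form in the kernel of $\delbar$, and so it is algebraic. Using the expression $\hoti(x,t) = G_x(x,t) - 2G_0(x) + G_0(0,t_0)$ together with the identification (from Section~\ref{sec:loccol}) of $G_x(x,v(x))$ with $-\log_{\cano}(\omega)(x)$, which extends to a Coleman function regular at $0$, one sees that $d\tau$ has at worst a simple pole at $0$; since $\eta$ has a double pole at $0$ while $\omega$ is regular, $F_\eta\,\omega$ also has at worst a simple pole at $0$. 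Hence $\psi$ is an algebraic $1$-form on $E-0$ with at worst a simple pole at $0$ and no other poles, so by the residue theorem on $E$ its residue at $0$ vanishes, giving $\psi \in H^0(E,\Omega^1) = \langle\omega\rangle$. Thus $\psi = c\,\omega$ for some constant $c$, which can be absorbed by replacing $F_\eta$ with $F_\eta - c/2$, yielding $d\tau = -2F_\eta\,\omega$.

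Symmetry follows because $W = \langle\bar{\omega}\rangle$ is stable under $[-1]^*$ (which acts by $-1$), so the Green function $G$ is $[-1]$-equivariant and hence so is $\hoti$ on $\tang|_{E-0}$; combined with $[-1]_* v = -v$ and Lemma~\ref{logbehav} (giving $\hoti(x,-t) = \hoti(x,t)$ since $\log(-1)=0$), this yields $\tau(-x) = \tau(x)$. For uniqueness, any two Coleman integrals of the prescribed form differ by $a\,\log_E + b$, where $\log_E$ is a Coleman primitive of $\omega$ on $E-0$ (the $p$-adic elliptic logarithm) and $a,b$ are constants; since $[-1]^*\omega = -\omega$ makes $\log_E$ odd under $[-1]$, symmetry forces $a=0$, and then the condition $\tau(0,v)=0$ forces $b=0$.

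The main technical obstacles I anticipate are the pole analysis at $0$ (ensuring $\psi$ really has at worst a simple pole there) and the appeal to the fact that the kernel of $\delbar$ on Coleman $1$-forms is concentrated in algebraic $1$-forms; once these are in hand, the argument reduces to bookkeeping with Proposition~\ref{gendata} and the $[-1]$-equivariance of $G$.
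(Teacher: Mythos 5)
Your proposal is correct and follows essentially the same route as the paper: compute the curvature via Proposition~\ref{gendata} and Remark~\ref{anysec}, use the $\delbar$ operator (via Proposition~2.7 of~\cite{Bes00}) to express $d\tau$ as $-2\omega$ times a Coleman primitive of a second-kind form with class $\bar{\omega}$, obtain symmetry from $[-1]$-equivariance, and pin down $\tau$ uniquely from symmetry together with $\tau(0,v)=0$. Your explicit pole analysis at $0$ and appeal to the residue theorem spell out the step the paper compresses into ``Clearly this means that $\eta'=\eta$,'' and your remark that $\ker\delbar$ on the relevant Coleman $1$-forms is the algebraic $1$-forms is exactly the implicit input the paper relies on.
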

\begin{proof}
The function $\tau$ is symmetric by functoriality of its construction
with respect to the involution $x\to -x$ of 
$E$, which induces $-1$ on the first cohomology, hence preserves
$W$. We have
$\tau(0,v)=0$ by Proposition~\ref{gendata}.
From this Proposition, and
Remark~\ref{anysec}
it follows that
$\delbar d\tau = -2g\mu = -2\bar{\omega}\otimes \omega$. We now
recall~\cite[Proposition~2.7]{Bes00} that for an affine $U$, the
$\delbar$ operator
sends $\omega_1 \times \int \omega_2 $ to $[\omega_2] \otimes
\omega_1$, where $[\omega_2]$ is the cohomology class of
$\omega_2$ in $\hdr^1(U)$. This means that on the
affine $E-0$ we have $d\tau = -2\omega \times \int
\eta'$ where the class of $\eta'$ in $\hdr^1(E-0/K)$ is exactly the
restriction of $\bar{\omega}$. Clearly this means that
$\eta'=\eta$. This determines $\tau$ up to an addition of a Coleman
integral of a constant multiple of $\omega$. Two functions satisfying
all the requirements therefore differ by such an integral which is in
addition symmetric and vanishes at $0$. It is therefore identically $0$.
\end{proof}
\begin{corollary}
  Suppose that $E$ is ordinary and $W$ is the unit root subspace, then
  $\tau$, restricted to the residue disc of $0$, is just
  $-2\log(\sigma_p)$, where $\sigma_p$ is
  the $p$-adic sigma function, as defined in~\cite{Maz-Tat91}.
\end{corollary}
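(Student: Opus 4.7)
The plan is to invoke the uniqueness clause of Theorem~\ref{mainB}: I would show that, on the residue disc of $0$, the function $-2\log\sigma_p$ (a Coleman function with a $\log T$ singularity at $0$ coming from the simple zero of $\sigma_p$) satisfies the three characterizing properties, namely that it is a Coleman integral of $-2\,\omega\times\!\int\!\eta$, that it is invariant under $[-1]$, and that its constant term with respect to $v$ at $0$ vanishes. Since Coleman integration restricts transparently to the residue disc, where it becomes formal power series integration, the uniqueness argument of Theorem~\ref{mainB} applies equally well in this restricted setting.

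The key input is Mazur and Tate's defining property of $\sigma_p$: it is the unique odd element of $T+T^2\mathbb{Z}_p[[T]]$ (for $T$ the formal parameter dual to $\omega$ at $0$) such that $D^2\log\sigma_p=x+c$, where $D=\omega^{-1}d$ and $c\in\mathbb{Z}_p$ is a specific constant determined by the integrality of $\sigma_p$, equivalently by the unit Frobenius eigenvalue. The critical observation is that this constant is \emph{exactly} the one for which $(x+c)\omega$ has cohomology class $\bom$ in $\hdr^1(E)$; thus, under the ordinarity hypothesis with $W$ the unit-root subspace, we have $(x+c)\omega=\eta$ in the notation of Theorem~\ref{mainB}. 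Integrating the Mazur--Tate equation twice then yields $d\log\sigma_p=\omega\cdot F$ for some Coleman primitive $F$ of $\eta$, whence $d(-2\log\sigma_p)=-2\,\omega\!\int\!\eta$, matching $d\tau$.

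For the symmetry I would use that $\sigma_p(-T)=-\sigma_p(T)$ together with $\log(-1)=0$ (Iwasawa branch), giving $\log\sigma_p(-T)=\log\sigma_p(T)$, so $-2\log\sigma_p$ is invariant under $[-1]$. For the constant-term condition, I write $\log\sigma_p=\log T+\log(\sigma_p/T)$: the analytic summand $\log(\sigma_p/T)$ vanishes at $0$ because $\sigma_p/T=1+O(T)$, and $T$ is compatible with $v$ in the sense that $\partial_v T|_0=1$, so this is precisely the constant term of $\log\sigma_p$ with respect to $v$ in the sense of Lemma~\ref{logbehav}. The ambiguity in the Coleman primitive $F$ of $\eta$ is absorbed into an additive multiple of the formal group logarithm $L=\int\omega$ inside $\tau$; since $L$ is odd, this multiple is killed by the symmetry condition, and the remaining additive constant is killed by vanishing at $(0,v)$. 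The uniqueness in Theorem~\ref{mainB} then gives $\tau=-2\log\sigma_p$ on the residue disc.

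The only genuinely nontrivial step is the identification $\eta=(x+c)\omega$ with Mazur--Tate's constant $c$. This requires matching their Frobenius characterization of $c$ (or equivalently the integrality of $\sigma_p$) with the definition of the unit-root subspace of $\hdr^1(E)$ as a Frobenius eigenspace, and it is the step where ordinarity and the specific choice of $W$ enter substantively. This is standard but is the main obstacle, and it is also the step that would fail in the supersingular case.
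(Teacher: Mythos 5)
Your proposal follows essentially the same route as the paper: identify $(x+c)\omega$ with the form $\eta$ of Theorem~\ref{mainB} using Mazur--Tate's defining differential equation, then appeal to the uniqueness clause of that theorem, checking symmetry (oddness of $\sigma_p$) and vanishing of the constant term at $(0,v)$. Two small points. First, your signs are internally inconsistent but happen to cancel: the Mazur--Tate equation as recorded in the paper reads $x+c=-\frac{d}{\omega}\bigl(\frac{1}{\sigma}\frac{d\sigma}{\omega}\bigr)$, so $D^2\log\sigma_p=-(x+c)$, and correspondingly the class of $(x+c)\omega$ is $-\bar\omega$, not $\bar\omega$, as one also sees by comparing leading Laurent coefficients ($\eta=dt(-t^{-2}+\cdots)$ from $[\eta]\cup[\omega]=1$ versus $(x+c)\omega=dt(t^{-2}+\cdots)$). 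Second, the step you flag as the main obstacle --- matching Mazur--Tate's $c$ with the unit-root subspace --- is precisely what the paper carries out, and it does so cleanly via Katz's formula $E_2(E,\omega)=12\,\frac{\eta_0\cup u}{\omega\cup u}$ for $u$ a generator of $W$, which exhibits $\eta_0+c\omega$ as a scalar multiple of $u$ by a direct computation; you might find that a tidier route than working directly with the Frobenius eigenvalue characterization you allude to. With the signs corrected and that lemma supplied, your argument is the paper's argument.
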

\begin{proof}
We recall a definition for the $p$-adic sigma function. The
differential $\omega$ determines a Weierstrass model of $E$ over the
ring of integers $\OO_K$,
\begin{equation}\label{weiersras}
  y^2+a_1 xy +a_3 y= x^3+a_2 x^2 + a_4 x+a_6
\end{equation}
in such a way that $\omega$ becomes the differential $dx/(2y+a_1
x+a_3)$ and it determines a parameter at $0$, $t=-x/y$.
We have the following local expansions:
\begin{equation}
\begin{split}
  x&= t^{-2} + \cdots\\
  y&= -t^{-3}+ \cdots \\
  \omega &= dt(1+\cdots )\;.
\end{split}\label{eq:local_exp}
\end{equation}

In particular, the parameter $t$
is normalized with respect to the dual of $\omega$ at $0$. Let $\eta$
be the form described in Theorem~\ref{mainB}, which in particular
satisfies $ [\eta]\cup [\omega] = 1$. Using the
description of cup products in terms of residues and integrals, one
finds the local expansion for $\eta$ is
\begin{equation*}
  \eta = dt(-t^{-2}+\cdots )\;.
\end{equation*}
According
to~\cite[Theorem~1.3]{Maz-Ste-Tat05}, $\sigma$ is the unique
odd function of the form $\sigma=t+\cdots $ satisfying
the differential equation
\begin{equation*}
  x(t)+c = -\frac{d}{\omega}\left(\frac{1}{\sigma}
    \frac{d\sigma}{\omega}\right)
\end{equation*}
with $$c=\frac{a_1^2+4a_2}{12} -\frac{E_2(E,\omega)}{12} .$$
Thus, we can write, in a neighborhood of $0$,
\begin{equation*}
  \log(\sigma) = -\int \left(\omega \times \int \eta'\right) \text{
    with } \eta' =
  (x+c) \omega\;.
\end{equation*}
Clearly, $\eta'$ is a form of the second kind with a double pole at
$0$ and no other poles. Its local expansion is $dt(t^{-2}+\cdots)$. We
claim that  $\eta'$ represents
a cohomology class in $W$ (the unit root subspace). Indeed, by
switching to a standard model $y^2=4x_3- g_2 x-g_3$, noting that in this
model $\omega=dx/y $, letting $\eta_0= xdx/y$ (this is what is usually
denoted $\eta$) and $u$ a generator of $W$, we have,
by~\cite[A.2.4.1]{Kat73a}
\begin{equation*}
  E_2(E,\omega) = 12 \frac{\eta_0 \cup u}{\omega \cup u}
\end{equation*}
so
\begin{equation*}
  \eta' = \eta_0+ c\omega = \eta_0 -  \frac{\eta_0 \cup u}{\omega
    \cup u} \omega = \textup{const}\cdot \left(\omega\cup u)\eta_0
    -(\eta_0 \cup u) \omega \right),
    \end{equation*}
and this is clearly a constant multiple of $u$. Thus,
$\eta'=-\eta$. We therefore find that
$-2\log(\sigma)$ and $\tau$ satisfy the same differential
equation and they therefore differ by an integral of a constant multiple of
$\omega$. The local expansion of $\sigma$ implies that
$\log(\sigma)(0,v)=0$. The same argument showing the uniqueness of
$\tau$ finishes the proof.
\end{proof}

We are now ready to compare the Coleman-Gross construction to the
Mazur-Tate construction. Suppose now that $E$ is an elliptic curve
defined over $\Q$. Let $\ell$ be the idele class character given as follows:
the component $\ell_p$ is the standard branch $\log_p$ of
$p$-adic logarithm,
normalized so that $\log_p(p)=0$. For $q\ne p$ the character $\ell_q$ is
unramified, with $\ell_q(q)=-\log_p(q)$.
With this choice we have the following:
\begin{corollary}\label{heighformul}
Fix a minimal Weierstrass equation for $E$.
  Suppose the point $x \in E(\Q)$ has coordinates $(a/d^2,b/d^3)$,
  with $a,b,d\in \mathbb{Z}$ and
  $d$ prime to both $a$ and $b$, and does not reduce to a singular point in any of
  the bad reduction fibers of $E$. Then the Coleman-Gross height of
  $x$ is given by
  \begin{equation*}
    h((x)-(0),(x)-(0)) = \tau(x)+2\log_p(d)\;.
  \end{equation*}
  In particular, if $E$ is ordinary at $p$ and $W$ is taken to be the
  unit root subspace, then the Coleman-Gross height coincides with the
  Mazur-Tate height.
\end{corollary}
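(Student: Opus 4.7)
The plan is to decompose $h((x)-(0),(x)-(0))=\sum_v h_v$ over the finite places of $\Q$ and to evaluate each local contribution using the canonical invariant tangent vector dual to $\omega$ at every point of $E$.

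At the place $p$, by construction $\tau$ is the pullback of $\hoti$ under this tangent vector, and $\tr_p$ is the identity on $\Qp$, so
\begin{equation*}
h_p((x)-(0),(x)-(0))=\tr_p\bigl(\hoti(x,v(x))\bigr)=\tau(x),
\end{equation*}
where $v(x)$ denotes the invariant tangent vector at $x$.

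At a prime $q\ne p$, the invariant differential $\omega=dx/(2y+a_1x+a_3)$ is integral on the minimal Weierstrass model, so its dual is an integral tangent vector at every point. Hence the formula $h_q(D_1,D_2)=\ell_q(\pi_q)\cdot(D_1,D_2)$, extended to non-disjoint support as in~\cite[\S 5]{Gross86}, applies directly. Integrality of the tangent vectors trivializes the $\chi(\alpha)$ correction of part~\eqref{third-cond} of Definition~\ref{intersect}, so the self-intersections $((x),(x))$ and $((0),(0))$ vanish and only the cross term contributes. The integral parameter $t=-x/y$ at the origin satisfies $v_q(t(x))=v_q(d)$, since $d$ is coprime to both $a$ and $b$, giving $((x),(0))=v_q(d)$. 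The hypothesis that $x$ lands on the smooth locus of every bad fiber ensures that $\bar x$ and $\bar 0$ extend to sections of the minimal regular model meeting only in the smooth locus, so no extra contribution from non-identity components enters. Combining with $\ell_q(\pi_q)=-\log_p q$ yields $h_q=2v_q(d)\log_p q$; summing over $q\ne p$ and using that $p\nmid d$ produces $2\log_p d$, which together with the contribution at $p$ gives the first assertion.

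For the Mazur-Tate comparison, apply the preceding corollary: on the residue disc of $0$ we have $\tau=-2\log\sigma_p$, so on that disc the formula becomes $h((x)-(0),(x)-(0))=-2\log(\sigma_p(x)/d(x))$, matching the Mazur-Tate definition of the $p$-adic height in~\cite{Maz-Ste-Tat05}. Since both heights are $\Qp$-valued quadratic functions on $E(\Q)$, agreement on the finite-index subgroup of points whose $p$-adic reduction lies in the formal group of $E$ propagates to all of $E(\Q)$. I expect the main obstacle to be justifying the intersection-theoretic computation at $q\ne p$: one must handle self-intersections of sections with non-disjoint support via Gross's extension and verify that the weak hypothesis ``does not reduce to a singular point'' is enough to suppress all contributions from vertical components of the bad fibers, rather than needing the stronger assumption that $x$ lies on the identity component.
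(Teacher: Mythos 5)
Your proof follows essentially the same route as the paper: decompose into local terms, identify $h_p$ with $\tau$ via the canonical tangent vector, compute the places $q\neq p$ by intersection theory, and finish the Mazur--Tate comparison by matching on the residue disc of $0$ and invoking quadraticity. The main divergence is that the paper simply cites Silverman's explicit formula~\cite[(26)]{Sil88} for the away-from-$p$ intersection number, whereas you re-derive it from Gross's extension of the pairing, arguing that the integral tangent vector dual to $\omega$ on the minimal model annihilates the self-intersection terms. That derivation is more illuminating but as written contains a small circularity: part~\eqref{third-cond} of Definition~\ref{intersect} only governs how the pairing changes when the tangent vector is rescaled, so ``integrality of the tangent vector'' does not by itself force $((x),(x))=0$; you must separately establish (e.g.\ from Gross's construction, where the normalized local Green's function has vanishing constant term at its pole when the tangent vector generates the normal bundle of the section over $\OO_q$) that the self-intersection with a generating tangent vector is zero, and then use \eqref{third-cond} to handle rescalings. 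Two minor points: the parenthetical ``using that $p\nmid d$'' is neither needed nor justified --- what actually makes the place $p$ drop out of $\sum_q \ord_q(d)\log_p q$ is that $\log_p(p)=0$ by the chosen branch; and ``matching the Mazur--Tate definition'' elides the normalization bookkeeping (the $-\tfrac12$ and the $\tfrac1p$ coming from the rescaled character $\ell/p$) that the paper spells out. None of these affect the correctness of the conclusion.
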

\begin{proof}
We have already identified the local height $h_p$. The formula follows
because the intersection pairing at any other primes is $2\ord_q(d)$
(see for example~\cite[(26)]{Sil88}). To compare with the Mazur-Tate
height, assume first that the point $x$ reduces to $0$ above $p$. In
this case, our formula matches precisely~\cite[(1.1)]{Maz-Ste-Tat05}
\begin{equation*}
  \frac{1}{p} \log_p\left(\frac{\sigma_p(x)}{d}\right)
\end{equation*}
given that their height is given by $- h((x)-(0),(x)-(0)) /2$ and that
their character is $\ell/p$. Since
both heights are quadratic, they are equal everywhere.
\end{proof}

\section{Application to Kim's theorem on integral points on elliptic
  curves}
\label{sec:Kim}

The paper~\cite{Kim10}, as corrected in~\cite{BKK11} contains a
characterization of integral points on elliptic curves, under certain
hypotheses. By removing some of these hypotheses, we prove a more general theorem, based on  the main results of the
previous section. We view this as an indication that the height
pairing has some anabelian source, a fact we hope to expand on in
later work.
\begin{theorem}\label{kims}
  Let $E$ be an elliptic curve over $\Q$, with
  $\operatorname{rank}E(\Q)=1$, given by a minimal Weierstrass
  equation~\eqref{weiersras}. Let $\Eg$ be the set of non-torsion rational
  points $x\in E(\Q)$
  with integral coordinates satisfying the additional condition that
  $x$ meets each bad reduction fiber at a non-singular point. Let
  $\omega=dx/(2y+a_1 x+a_3)$ be an invariant differential on $E$ and let
  $\eta_0=x\omega$ Let
  \begin{equation*}
    \DD(z):= \int \left(\omega \times \int \eta_0 \right)
  \end{equation*}
  be the unique such integral with $\int \eta_0$ and $\DD$ having
  constant term $0$ with respect to 
  the tangent vector dual to $\omega$ at $0$.
  Then the function $$\frac{\DD(z)}{(\int_0^z \omega)^2}$$ is constant on
  $\Eg\subset E(\Q)\subset E(\Qp)$.
\end{theorem}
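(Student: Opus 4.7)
The plan is to reduce the statement to a linear-algebra identity using two main ingredients from the earlier sections: Theorem~\ref{mainB}, which identifies $\tau$ as the double Coleman integral $-2\int(\omega\times\int\eta)$ for a specific second-kind form $\eta$, and Corollary~\ref{heighformul}, which identifies $\tau(x)$ with the global Coleman-Gross height for $x\in\Eg$. The key identity I would establish is
\[
\DD(z) \;=\; \tfrac{a}{2}\,L(z)^2 + \tfrac{1}{2}\tau(x), \qquad L(z):=\int_0^z\omega,
\]
for some constant $a\in\Qpb$ (depending on the choice of complementary subspace $W$).

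To prove this identity I would fix any $W\subset\hdr^1(E)$ complementary to $\Omega^1(E)$ and take the associated $\eta$ from Theorem~\ref{mainB}. The local expansions~\eqref{eq:local_exp} give $\eta_0 = (t^{-2}+\cdots)\,dt$ near $0$, while the computation in the proof of the corollary to Theorem~\ref{mainB} shows $\eta = (-t^{-2}+\cdots)\,dt$. Since both forms are of the second kind with only double poles at $0$, their sum $\eta_0 + \eta$ is holomorphic on all of $E$, hence equals $a\omega$ for some $a$. Choosing compatible Coleman primitives, one has $\int \eta_0 = aL - \int\eta + \text{const}$; multiplying by $\omega$ and integrating, using $\int(\omega L) = \tfrac12 L^2$ (integration by parts) and Theorem~\ref{mainB}'s $\int(\omega\int\eta)=-\tfrac12\tau$, yields
\[
\DD(z) = \tfrac{a}{2}L(z)^2 + \tfrac12\tau(x) + C\,L(z) + C'.
\]
The normalizations $\DD(0,v)=0$, $\tau(0,v)=0$ (Proposition~\ref{gendata}), and $L(0)=0$ force $C'=0$, while parity under $[-1]$ (both $\DD$ and $\tau$ are even, whereas $L$ is odd) forces $C=0$.

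With the identity in hand, the conclusion follows quickly. For $x\in\Eg$, Corollary~\ref{heighformul} gives $\tau(x)=h((x)-(0),(x)-(0))$, since $d=1$ and all local contributions at primes $q\ne p$ vanish. Letting $P$ generate $E(\Q)/E(\Q)_{\textup{tors}}$, I would write $x = nP + T$ with $T$ torsion and $n\in\Z$. Since the global height is bilinear and vanishes on torsion, $h(x) = n^2 h(P)$; since $L$ is a group homomorphism $E(\Qp)\to\Qpb$ vanishing on torsion, $L(x)=nL(P)$. Hence
\[
\frac{\DD(x)}{L(x)^2} = \frac{a}{2} + \frac{h(x)}{2L(x)^2} = \frac{a}{2} + \frac{h(P)}{2L(P)^2},
\]
which is independent of $x\in\Eg$.

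The main obstacle is the bookkeeping of additive constants in the identity derived in the second paragraph: each Coleman integral above is defined only up to a constant, and pinning these constants down requires simultaneously the normalizations at the tangential basepoint $(0,v)$ and a parity argument using the involution $[-1]$. Once this is handled, the rank~$1$ hypothesis and quadraticity of the Coleman-Gross height do essentially all of the remaining work.
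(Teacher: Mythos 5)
Your proof is correct and follows essentially the same route as the paper: identify $\DD$ (up to a quadratic correction) with the function $\tau$ of Theorem~\ref{mainB}, invoke Corollary~\ref{heighformul} to replace $\tau$ by the global height on $\Eg$, and conclude from quadraticity of $h$ and $L^2$ on a rank-one Mordell--Weil group. The one small difference is that the paper simply says ``choose $W$ appropriately'' so that (up to a factor, which it leaves implicit) $\DD$ becomes $\tau$; you instead fix an arbitrary $W$, observe $\eta_0+\eta=a\omega$ from the matching $t^{-2}$-coefficients, and track the resulting $\tfrac{a}{2}L^2$ term explicitly, using parity and the $(0,v)$-normalization to kill the lower-order constants. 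This is a bit more work but makes the $W$-independence of the statement transparent, and both arguments rest on exactly the same ingredients.
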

\begin{proof}
The condition on $\int \eta_0$ makes $\int \eta_0$ anti-symmetric, hence
$\DD$ symmetric, so, by choosing $W$ appropriately we may assume that
$\DD$ is the function $\tau$ from Theorem~\ref{mainB}. By
Corollary~\ref{heighformul}, the function $\DD$
coincides with
the global height $h$ on $\Eg$, so the function which we wish
to
prove is constant is just $h(x)/(\int_0^x \omega)^2$. But this is
obvious as both the numerator and the denominator are quadratic
functions on the rank $1$ group of rational points of $E$.
\end{proof}
\begin{remark}
The above is indeed a generalization of the results of Kim. He makes
the assumption that at all the primes of bad reduction the N\'eron
model has just one component, and it is easy to see that in this case
$\Eg$ is the set of all (non-torsion) integral points. Kim's
formulation is slightly different but clearly equivalent. Thus, we
obtain an extension of Kim's result when there can be more than one
component in the N\'eron model at some primes, and we further remove
the condition about the $p$-part of the Tate-Shafarevich group of $E$
being finite. From the description of the
local height at primes of bad reduction it is further easy to see,
using the description of the local heights at primes of bad
reduction~\cite{Sil88} that given the reduction types at the bad primes,
and given the height at one integral point, one can write down a
finite number of possibilities for $\DD(z)/(\int_0^z \omega)^2$ at the integral points.
\end{remark}

\section{Examples and another application}

In~\cite{BKK11} several numerical examples of Kim's theorem were given. These
call for the computation of iterated Coleman integrals $\int_v^P
\omega\eta$ where $v$ is a tangential base point at $0$. Although
iterated Coleman integrals on elliptic and 
hyperelliptic curves can be explicitly computed
 \cite{balakrishnan:iterated}, integrating directly from a
tangential basepoint is more subtle than integrating from a finite
Weierstrass point 
because one has to be careful with normalization with respect to the
constant term. The examples in~\cite{BKK11} were all
done by using an integral 2-torsion point, or by a manipulation
using two integral points that relies on Kim's theorem, as well as the
fact that the elliptic curve has rank 1.

Using Theorem~\ref{mainB} we can describe an alternative method for computing
such integrals. We follow this with several examples.

In this section we change the notation slightly. For an elliptic curve
$E$ as in~\eqref{weiersras}, we denote by $\omega,\eta$
the forms $dx/(2y+a_1x+a_3)$ and $xdx/(2y+a_1x+a_3)$ respectively, so
that $\omega$ is the same as in Theorem~\ref{kims} while $\eta$ is
what was $\eta_0$ there. We
also use standard notation for iterated integrals $\int_x^y \alpha
\beta$, which is $\int \left(\alpha \times \int \beta \right)$
evaluated at $y$, where both the inner and outer integrals are fixed
to vanish at $x$. This also applies to $x$ replaced by a tangential
base point at $0$. 

Let $T$ be an auxiliary point. Using the
following two formulas from~\cite{BKK11}
\begin{align*}
  \int_v^P \omega \eta &= \int_T^P \omega \eta +  \int_T^P
  \omega \int_v^T \eta+ \int_v^T  \omega \eta\\
  \int_v^T \eta &= \frac{1}{2}  \int_{-T}^T \eta
\end{align*}
we see that the integral $ \int_v^P \omega \eta$ may be computed
using integrals between points and the integral $ \int_v^T \omega
\eta $.

For any fixed $T$ we can use Theorem~\ref{mainB} to compute the integral above
using an extension of the local height computation in~\cite{Bes-Bal10} to
divisors with non-disjoint support, along the lines described in
Sections \ref{sec:cg}~and~\ref{sec:loccol}. However, if $T$ is a torsion point, a more direct
computation is possible. 

\begin{proposition}
  Let $E$, $\omega$ and $\eta$ be as above and let $v$ be the
  tangential base point at $0$ dual to $\omega$. Let $T=\pab\ne 0$ be
  either a $2$- or $3$- torsion point. Then 
  \begin{equation*}
    \int_v^T \omega \eta =
    \begin{cases}
      \frac{1}{4}\log(f'(A) - a_1 B) & \text{if}\;\, 2T=0,\\
       \frac{1}{3}\log(2B+a_1 A + a_3) & \text{if}\;\; 3T=0.
     \end{cases}
   \end{equation*}
 \end{proposition}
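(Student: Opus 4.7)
The plan is to reduce the integral to a value of the local height function $\tau$ from Theorem~\ref{mainB}, and then to compute $\tau(T)$ via the Green function applied to an explicit rational function on $E$ whose divisor is supported on $\{T,0\}$.

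\emph{Reduction step.} The form $\eta = x\omega$ in the proposition and the second-kind form appearing in Theorem~\ref{mainB} (representing $\bar\omega$) both have a double pole at $0$ and no other poles, so they differ by a multiple of $\omega$. Using the shuffle relation $\int\omega\omega = \tfrac{1}{2}(\int\omega)^2$, one obtains
\begin{equation*}
\tau(P) \;=\; 2\int_v^P \omega\eta \;+\; \mu\biggl(\int_v^P \omega\biggr)^{\!2}
\end{equation*}
for some constant $\mu$. Since $P\mapsto \int_v^P\omega$ is a group homomorphism vanishing on torsion, $\int_v^T\omega = 0$, and therefore $\int_v^T\omega\eta = \tau(T)/2$.

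\emph{Green-function step.} Choose a rational function $\phi$ on $E$ with $\operatorname{div}(\phi) = m((T)-(0))$, where $m\in\{2,3\}$ is the order of $T$. By~\eqref{goff}, $m(G_T - G_0) = \log\phi + C$ for some constant $C$. Take constant terms with respect to the parameters dual to the invariant vectors $v_0$ at $0$ and $v_T$ at $T$, applying Lemma~\ref{greenbehav} to identify the logarithmic singularities of $G_T$ at $T$ and $G_0$ at $0$, and Lemma~\ref{logbehav} for the dependence on the choice of parameter. Using the symmetry $G_T(0) = G_0(T)$ to eliminate $C$, subtracting the two resulting equations gives
\begin{equation*}
m\,\tau(T) \;=\; c_{v_T}(\log\phi)(T) - c_{v_0}(\log\phi)(0),
\end{equation*}
where $c_w(F)(x)$ denotes the constant term of the Coleman function $F$ at $x$ with respect to the parameter dual to $w$.

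\emph{Leading coefficients.} For $m = 2$, take $\phi = x - A$. At $0$, the parameter $t = -x/y$ (dual to $v_0$ by~\eqref{eq:local_exp}) gives $x - A = t^{-2}(1 + O(t))$, so $c_{v_0}(\log\phi)(0) = 0$. At $T$, since $dx = (2y + a_1 x + a_3)\omega$ vanishes to first order at $T$ (because $2B + a_1 A + a_3 = 0$), local integration yields $x - A = (f'(A) - a_1 B)z^2 + O(z^3)$ (with $f$ the Weierstrass cubic), so $c_{v_T}(\log\phi)(T) = \log(f'(A) - a_1 B)$, giving $\int_v^T\omega\eta = \tfrac{1}{4}\log(f'(A) - a_1 B)$. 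For $m = 3$, take $\phi = y - \alpha x - \beta$, the tangent line to $E$ at the inflection point $T$ (with $\alpha = (f'(A)-a_1 B)/(2B + a_1 A + a_3)$ and $\beta = B - \alpha A$). At $0$, $\phi = -t^{-3}(1 + O(t))$ gives $c_{v_0}(\log\phi)(0) = \log(-1) = 0$. The non-routine step is at $T$: substituting $y = \phi + \alpha x + \beta$ into the Weierstrass equation yields the algebraic identity
\begin{equation*}
\phi\bigl(\phi + H(x)\bigr) \;=\; (x - A)^3 \quad\text{on } E,
\end{equation*}
where $H(x) = (2\alpha + a_1)x + 2\beta + a_3$. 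Since $H(A) = 2B + a_1 A + a_3 =: \gamma$ and $x - A = \gamma z + O(z^2)$ near $T$, comparing leading terms gives $\phi = \gamma^2 z^3 + O(z^4)$, so $c_{v_T}(\log\phi)(T) = 2\log\gamma$. Hence $\int_v^T\omega\eta = \tfrac{1}{3}\log(2B + a_1 A + a_3)$.

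The main obstacle is the $3$-torsion case: extracting the leading coefficient $\gamma^2$ of $\phi$ at $T$. The identity $\phi(\phi + H(x)) = (x-A)^3$, which packages both the inflection property of $T$ and the form of its tangent line, is the crux of that computation; the rest is routine application of the Green-function framework developed in Sections~\ref{sec:cg}~and~\ref{sec:loccol}.
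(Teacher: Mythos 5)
Your proof is correct and follows the same route as the paper: you equate $\int_v^T\omega\eta$ with $\tfrac{1}{2}\tau(T)$, express $m\,\tau(T)$ as $\log(\phi[T]/\phi[0])$ for a rational function $\phi$ with divisor $m((T)-(0))$ (the paper obtains this in one line from condition~(2) of Definition~\ref{intersect} together with bilinearity of $h_p$, which is exactly what your ``Green-function step'' re-derives from scratch), and then compute normalized leading coefficients using the same choices $\phi=x-A$ and $\phi=y-\alpha x-\beta$. Your $3$-torsion identity $\phi\bigl(\phi+H(x)\bigr)=(x-A)^3$ is a clean algebraic repackaging of the paper's direct power-series substitution and yields the same leading coefficient $\gamma^2$; you also correctly observe that the normalized value of $\phi$ at $0$ in the $3$-torsion case is $-1$ rather than the paper's stated $1$, an immaterial discrepancy since $\log(-1)=0$.
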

 \begin{proof}
Let $n$ be the order of $T$. Consider the divisor
  $D= (T) - (0)$. Then $nD $ is the divisor of a rational function $g$ on $E$
and by Theorem~\ref{mainB} and Definition~\ref{intersect}
we have 
$$
  2 \int_v^T \omega \eta =  \frac{1}{n} \log(g[T]/g[0]),
$$
where the square bracket notation stands for the normalized value with respect
to the chosen parameter at the point. It remains to compute the right
hand side in both cases.

Recall that the normalized local parameter at $0$ is $t=-x/y$. If
$T=\pab$ is 2-torsion, then $2B+a_1 A+ a_3=0$, and a local parameter at
$T$ is $y_0 = y + \frac{1}{2} (a_1x + a_3)$. To normalize it,
we observe that $y_0^2 = f_0(x)$, with $f_0(x)= f(x) + \frac{1}{4}
(a_1x + a_3)^2$ so that
$2y_0 d y_0 = f_0^\prime (x) dx,$
\begin{equation*}
  \omega = \frac{dx}{2y_0} =\frac{dy_0}{f_0^\prime (x)},
\end{equation*}
and evaluating at $T$ we see that the normalized parameter is
\begin{equation*}
  t_T=\frac{y_0}{f_0^\prime (A)} = \frac{y_0}{f'(A)-a_1 B}\;.
\end{equation*}
We can take $g=x-A$. It is easy to see from the local
expansion~\eqref{eq:local_exp}
that $gt^2$ has value $1$ at $0$. At $T$ we have
\begin{equation*}
  \frac{g}{t_T^2} = \frac{(f'(A)-a_1 B)^2 (x-A)}{y_0^2} =
  \frac{(f'(A)-a_1 B)^2 (x-A)}{f_0(x)} ,
\end{equation*}
and the value at $T$ is going to be
\begin{equation*}
   \frac{(f'(A)-a_1 B)^2 }{f_0^\prime(A)} = f'(A)-a_1 B.
\end{equation*}
This proves the case of 2-torsion.

When $T$ is 3-torsion we proceed as follows: write the equation of the
tangent at $T$ as $y=\alpha x+
\beta= \alpha(x-A)+B$. We can then take $g=y-\alpha x -\beta$ as it has a
pole of order $3$ at infinity, and the fact that $T$ is 3-torsion
exactly means that it vanishes to order $3$ at $T$. It will turn
out that $\alpha,\beta$ are not relevant.

We compute the normalized values of $g$ at $0$, which is the value of
$gt^3$ there, which is $1$ using the expansion~\eqref{eq:local_exp}.

At $T$ we have the parameter $x-A$. The
normalization factor is the value at $T$ of 
$$
  \frac{\omega}{d(x-A)} = \frac{\omega}{dx} = \frac{1}{2y+a_1 x+ a_3}
$$
which is $1/(2B+a_1 A + a_3)$.
The normalized value of $g$ at $T$ is thus $(2B+a_1 A + a_3)^3$ times
the value of  $g/(x-A)^3$ at $T$. To get this value, write $g=
c(x-A)^3 + \cdots$ and substitute this into the equation of the curve
\begin{equation*}
  (g+\alpha (x-A)+ B)^2 + a_1 (x-A) (g+\alpha (x-A)+ B) + (a_3+a_1 A)
  (g+\alpha (x-A)+ B) = (x-A)^3 + \cdots
\end{equation*}
Now we solve for the coefficient of $(x-A)^3$. The result is
\begin{equation*}
  2cB + c(a_3+a_1 A) = 1,
\end{equation*}
and so the normalized value is $(2B+a_1 A + a_3)^2$. The final result
is obtained by taking a log and multiplying by the required factors of
$1/3$ and $1/2$.

\end{proof}

\begin{example}[Two-torsion]Consider the elliptic curve $E: y^ 2= x^3 - 16x + 16$
  with minimal model $y^2 + y = x^3 - x$ (Cremona label ``37a1''). Let
  $P = (0,4)$ be an integral point on $E$ (arising from an integral
  point on the minimal model) and let $\omega = \frac{dx}{2y},
  \eta = x\frac{dx}{2y}$ on $E$. Note that this curve has no
  integral Weierstrass points over $\Q$.

  Nevertheless, one can consider Weierstrass points on $E$ over $\Q_p$
  for various $p$.  For example, at $p = 13,$ we have the point $$W
  =(7 + 7 \cdot 13 + 4 \cdot 13^{2} + 7 \cdot 13^{3} + 6 \cdot 13^{4}
  + O(13^5), O(13^{5}))$$ on the short Weierstrass model, which
  corresponds to the point $$W' = (5 + 8 \cdot 13 + 7 \cdot 13^{2} +
  11 \cdot 13^{3} + 4 \cdot 13^{4} + O(13^5), 6 + 6 \cdot 13 + 6 \cdot
  13^{2} + 6 \cdot 13^{3} + 6 \cdot 13^{4} + O(13^{5}))$$ on the
  minimal model.  Using previous methods, we can compute the invariant
  ratio $$\frac{\int_b^P \omega \eta}{(\int_b^P \omega)^2} =
  11 \cdot 13 + 6 \cdot 13^{2} + 7 \cdot 13^{4} + 6 \cdot 13^{5} +
  O(13^6),$$ as well as the auxiliary integrals \begin{align*}\int_W^P \omega
  \eta &= 12 \cdot 13 + 7 \cdot 13^{2} + 11 \cdot 13^{3} + 7 \cdot
  13^{4} + 11 \cdot 13^{5} + O(13^6)\\
   \int_b^P \omega &= 4
  \cdot 13 + 2 \cdot 13^{2} + 2 \cdot 13^{3} + 10 \cdot 13^{4}
  +O(13^6)\end{align*} to deduce that

$$\int_b^W \omega \eta = 13 + 5 \cdot 13^{2} + 8 \cdot 13^{3} + 4 \cdot 13^{4} + 13^{5} + 9 \cdot 13^{6}.$$

Alternatively, using the proposition above, one can directly compute, via the minimal model, that \begin{align*}\int_b^{W'} \omega' \eta' &= \frac{1}{4}\log(f'(x(W')))\\
  &= 13 + 5 \cdot 13^{2} + 8 \cdot 13^{3} + 4 \cdot 13^{4} + 13^{5} +
  9 \cdot 13^{6},\end{align*} where $\omega',\eta'$ denote the
pullbacks of $\omega,\eta$ to the minimal model.
\end{example}

\begin{example}[Two-torsion]Consider the elliptic curve $E: y^2 = x(x-1)(x+9)$ with
  $1$-forms $\omega=\frac{dx}{2y},\eta = x\frac{dx}{2y}$ and the
  points $W_1 = (1,0)$, $W_2 = (0,0)$. $E$ has minimal model $E': y^2
  = x^3 - x^2 - 30x + 72$ (Cremona label ``480f1''), and note that in
  particular $W_1$ and $W_2$ are integral on the minimal model but
  that the Tamagawa numbers of this curve are not 1.

  The prime $p=7$ is good and ordinary.  Letting $\omega',\eta'$
  denote the pullbacks of $\omega,\eta$ , we compute
  $\int_{W_1}^{W_2} \omega' \eta'$ by using the interpretation
  of the height pairing (via a tangential basepoint $b$), we have
  \begin{align*}\int_{W_1}^{W_2} \omega' \eta' &= \int_{b}^{W_2} \omega' \eta' - \int_b^{W_1} \omega' \eta' \\
    &= \frac{1}{4}\log_p\left(\frac{f'(x(W_2))}{f'(x(W_1))}\right)\\
    &=6 \cdot 7 + 3 \cdot 7^{2} + 3 \cdot 7^{3} + 2 \cdot 7^{5} +
    O(7^{6}) .
  \end{align*}

  As a consistency check, we can compare this calculation to that
  described in \cite{balakrishnan:iterated}, where using near-boundary points in
  each Weierstrass disc, we can compute (via an auxiliary
  non-Weierstrass point $Q = (-1,4)$)
  \begin{align*}\int_{W_1}^Q \omega \eta &= 6 \cdot 7 + 5 \cdot 7^{2} + 4 \cdot 7^{3} + 6 \cdot 7^{4} + O(7^{6})\\
    \int_{W_2}^Q \omega \eta &= 2 \cdot 7^{2} + 7^{3} + 6 \cdot
    7^{4} + 5 \cdot 7^{5} + O(7^{6}),\end{align*} and we see

\begin{align*}\int_{W_1}^{W_2} \omega\eta & = \int_{W_1}^Q \omega \eta - \int_{W_2}^Q \omega \eta \\
  &= 6 \cdot 7 + 3 \cdot 7^{2} + 3 \cdot 7^{3} + 2 \cdot 7^{5} +
  O(7^{6}).\end{align*}

\end{example}

\begin{remark}We note that if one is after the value $\int_v^P
\omega\eta$ where $P$ is an arbitrary point, then for the purposes
of computation, it is much faster to use an intermediate
$3$-torsion point $T$ to break up the path rather than using a
$2$-torsion point. This is because computing iterated integrals
from a Weierstrass endpoint requires computations over very highly
ramified extensions $\Q_p(p^{1/d})$, which is quite slow in
existing implementations.
 \end{remark}

\begin{example}[Three-torsion] We again consider $E: y^2 = x^3 - 16x + 16$ (minimal model
37a), this time over $\Q_7$, where it has a $\Q_7$-rational $3$-torsion
point. As before, let $P = (0,4),$ which is integral on the minimal model.
Again, using the method in~\cite{BKK11}, we compute the
invariant ratio $$\frac{\int_b^P \omega\eta}{\left(\int_b^P
    \omega\right)^2} = 7^{-1} + 1 + 3 \cdot 7 + 6 \cdot 7^{2} + 5
\cdot 7^{4} + 6 \cdot 7^{5} + 6 \cdot 7^{6} + O(7^{7}),$$ as well as
the single integral $$\int_b^P \omega = 2 \cdot 7 + 4 \cdot 7^{3} +
5 \cdot 7^{4} + 4 \cdot 7^{5} + 7^{6} + 2 \cdot 7^{7} + 7^{8} + 7^{9}
+ O(7^{10})$$ to deduce the value of
$$\int_b^P \omega \eta = 4 \cdot 7 + 4 \cdot 7^{2} + 7^{4} + 4 \cdot 7^{5} + 7^{6} + 2 \cdot 7^{7} + 5 \cdot 7^{8} + O(7^{9}).$$

Meanwhile, $E$  has the $3$-torsion point $T$ with \begin{align*}x(T) &= 3 + 5 \cdot 7 + 3 \cdot 7^{2} + 3 \cdot 7^{3} + 6 \cdot 7^{4} + 6 \cdot 7^{5} + 6 \cdot 7^{6} + 5 \cdot 7^{7} + 5 \cdot 7^{8} + 5 \cdot 7^{9} + O(7^{10}), \\
  y(T) &=3 + 3 \cdot 7 + 5 \cdot 7^{2} + 4 \cdot 7^{3} + 2 \cdot 7^{5}
  + 2 \cdot 7^{7} + 6 \cdot 7^{8} + 2 \cdot 7^{9} +
  O(7^{10}).\end{align*}

We compute \begin{align*}\int_b^T \eta &= 1 + 2 \cdot 7 + 3 \cdot 7^{2} + 2 \cdot 7^{3} + 3 \cdot 7^{4} + 6 \cdot 7^{5} + 7^{6} + 5 \cdot 7^{7} + 5 \cdot 7^{8} + 4 \cdot 7^{9} + O(7^{10})\\
  \int_T^P \omega  &=2 \cdot 7 + 4 \cdot 7^{3} + 5 \cdot 7^{4} + 4 \cdot 7^{5} + 7^{6} + 2 \cdot 7^{7} + 7^{8} + 7^{9} + O(7^{10})\\
  \int_T^P \omega\eta &= 5 \cdot 7^{2} + 4 \cdot 7^{4} + 7^{5} +
  2 \cdot 7^{6} + 2 \cdot 7^{7} + 5 \cdot 7^{8} + O(7^{9})
\end{align*}
to deduce
\begin{align*}\int_b^T \omega\eta &= \int_b^P\omega\eta - \int_b^T\eta\int_T^P \omega - \int_T^P \omega\eta\\
  &= 2 \cdot 7 + 2 \cdot 7^{2} + 3 \cdot 7^{3} + 6 \cdot 7^{4} + 2
  \cdot 7^{5} + 6 \cdot 7^{6} + 7^{7} + 5 \cdot 7^{8} +
  O(7^9)\end{align*}

Meanwhile, the corresponding $3$-torsion point $T_1$ on the minimal
model has coordinates

\begin{align*}x(T_1) &= 6 + 4 \cdot 7 + 2 \cdot 7^{2} + 4 \cdot 7^{3} + 7^{4} + 5 \cdot 7^{5} + 7^{6} + 3 \cdot 7^{7} + 7^{8} + 3 \cdot 7^{9} + O(7^{10}) \\
  y(T_1) &= 3 \cdot 7 + 5 \cdot 7^{2} + 3 \cdot 7^{3} + 2 \cdot 7^{4}
  + 2 \cdot 7^{5} + 4 \cdot 7^{6} + 4 \cdot 7^{9} +
  O(7^{10}).\end{align*}
and we compute directly using the formula that \begin{align*}\int_b^{T_1}\omega'\eta' &= \frac{1}{3}\log(2y(T_1)+a_1 x(T_1) + a_3)) \\
  &= 2 \cdot 7 + 2 \cdot 7^{2} + 3 \cdot 7^{3} + 6 \cdot 7^{4} + 2
  \cdot 7^{5} + 6 \cdot 7^{6} + 7^{7} + 5 \cdot 7^{8} + 7^{9} +
  O(7^{10})\end{align*}

  \end{example}

\begin{example}[Three-torsion] We consider $E: y^2 = x^3 +405x + 16038$ (minimal model 53a) over $\Q_7$, where it has a $\Q_7$-rational $3$-torsion point. Let $P = (-9,108)$, which is integral on the minimal model.
Using the method in~\cite{BKK11}, we compute the invariant ratio

$$\frac{\int_b^P \omega\eta}{\left(\int_b^P
    \omega\right)^2} = 6 \cdot 7^{-1} + 6 + 7 + 6 \cdot 7^{2} + 4 \cdot 7^{3} + 2 \cdot 7^{4} + 5 \cdot 7^{5} + 4 \cdot 7^{6} + O(7^7)$$
    
    as well as the single integral  $$\int_b^P \omega =6 \cdot 7 + 7^{2} + 4 \cdot 7^{3} + 5 \cdot 7^{4} + 2 \cdot 7^{5} + 5 \cdot 7^{6} + 3 \cdot 7^{7} + 6 \cdot 7^{8} + 4 \cdot 7^{9} +O(7^10) $$
    
    to deduce the value of   $$\int_b^P \omega \eta = 6 \cdot 7 + 3 \cdot 7^{2} + 6 \cdot 7^{3} + 6 \cdot 7^{4} + 7^{5} + 4 \cdot 7^{6} + 7^{7} + O(7^{9}).$$
    
    Meanwhile, $E$ has the $3$-torsion point $T$ with 
    
    \begin{align*}x(T) &= 3 + 6 \cdot 7 + 6 \cdot 7^{2} + 3 \cdot 7^{3} + 6 \cdot 7^{4} + 5 \cdot 7^{5} + 6 \cdot 7^{6} + 7^{8} + 5 \cdot 7^{9} + O(7^{10})  \\
    y(T) &= 2 + 5 \cdot 7 + 5 \cdot 7^{2} + 3 \cdot 7^{3} + 5 \cdot 7^{4} + 4 \cdot 7^{5} + 6 \cdot 7^{6} + 3 \cdot 7^{7} + 4 \cdot 7^{9} + O(7^{10}).
    \end{align*}

    We compute \begin{align*}\int_b^T \eta &= 1 + 7 + 4 \cdot 7^{2} + 3 \cdot 7^{3} + 3 \cdot 7^{4} + 6 \cdot 7^{5} + 7^{6} + 2 \cdot 7^{7} + 7^{8} + 2 \cdot 7^{9} +O(7^{10})\\
  \int_T^P \omega  &= 6 \cdot 7 + 7^{2} + 4 \cdot 7^{3} + 5 \cdot 7^{4} + 2 \cdot 7^{5} + 5 \cdot 7^{6} + 3 \cdot 7^{7} + 6 \cdot 7^{8} + 4 \cdot 7^{9} +O(7^{10})\\
  \int_T^P \omega\eta &=  3 \cdot 7 + 2 \cdot 7^{2} + 3 \cdot 7^{3} + 3 \cdot 7^{5} + 2 \cdot 7^{6} + 3 \cdot 7^{7} + 6 \cdot 7^{8} + O(7^9)\end{align*}
to deduce
\begin{align*}\int_b^T \omega\eta &=  \int_b^P\omega\eta - \int_b^T\eta\int_T^P \omega - \int_T^P \omega\eta\\
  &=4 \cdot 7 + 7^{3} + 6 \cdot 7^{4} + 5 \cdot 7^{5} + 7^{7} + 2 \cdot 7^{8} + O(7^9). \end{align*}

Meanwhile, the corresponding $3$-torsion point $T_1$ on the minimal
model has coordinates

\begin{align*}x(T_1) &=5 + 3 \cdot 7 + 2 \cdot 7^{2} + 5 \cdot 7^{3} + 2 \cdot 7^{5} + 3 \cdot 7^{6} + 5 \cdot 7^{7} + 7^{8} + 3 \cdot 7^{9} + O(7^{10})  \\
  y(T_1) &= 6 + 5 \cdot 7 + 7^{4} + 5 \cdot 7^{5} + 4 \cdot 7^{6} + 6 \cdot 7^{7} + O(7^{10}).\end{align*}
and we compute directly using the formula that \begin{align*}\int_b^{T_1}\omega'\eta' &= \frac{1}{3}\log(2y(T_1)+a_1 x(T_1) + a_3)) \\
  &= 4 \cdot 7 + 7^{3} + 6 \cdot 7^{4} + 5 \cdot 7^{5} + 7^{7} + 2 \cdot 7^{8} + 7^{9} +O(7^{10}). \end{align*}

\end{example}

\end{document}